\newtheorem{example}{Example}
\newtheorem{thm}{Theorem}[section]
\newtheorem{lemma}{Lemma}[section]
\newtheorem{rem}{Remark}
\renewcommand{}
\def\epsilon{\varepsilon} 
\newcommand{\mat}[1]{\boldsymbol{#1}}
\begin{document}
	\begin{frontmatter}
		\title{
			Adaptive moving mesh methods for the planar Willmore flow}
		
		\author[1]{Zhenghua Duan}
		\author[2]{Meng Li*}
		\address[1]{School of Mathematics and Statistics, Zhengzhou University, Zhengzhou 450001, China}
		\address[2]{School of Mathematics and Statistics, Huazhong University of Science and Technology, Wuhan 430074, China}

		\ead{This author's research was supported by the National Natural Science Foundation of Henan (No. 262300421876) and the National Natural Science Foundation of China (No. 11801527). Corresponding author: Meng\_Li@hust.edu.cn}
		
		\begin{abstract}
			In this paper, we propose adaptive moving mesh methods for the planar Willmore flow by incorporating a tangential velocity into the original geometric evolution. The tangential velocity is designed based on a monitor function constructed from the curvature and its variation, enabling dynamic mesh redistribution along the evolving interface. This adaptive redistribution enhances spatial resolution in regions of high geometric complexity while preserving mesh regularity. 
			The resulting moving mesh formulation is discretized using the $k$th-order backward differentiation formula (BDF$k$) in time and finite difference methods in space.
			Moreover, a class of new relaxed Lagrange multiplier approaches is further incorporated into the adaptive moving mesh framework to construct energy-stable adaptive moving mesh methods. 
			Furthermore, to enhance the adaptivity and flexibility of the proposed framework, we develop an adaptive strategy for selecting the monitor function and introduce an alternative redistribution approach.
			Finally, extensive numerical experiments demonstrate that the proposed BDF$k$-based adaptive scheme accurately captures the geometric evolution of the planar Willmore flow and exhibits excellent robustness and computational efficiency for problems involving complex interface geometries. 
		\end{abstract}

		\begin{keyword}
			Willmore flow; finite difference method; adaptive tangential velocity; monitor function; relaxed Lagrange multiplier method
		\end{keyword}
	\end{frontmatter}
	\section{Introduction}\label{sec:intro}\numberwithin{figure}{section}
	\numberwithin{equation}{section}
	Curvature-driven geometric evolution has been widely studied in geometric analysis, computational geometry, and physical modeling 
	\cite{Willmore1993,Helfrich1973,Deckelnick02}. 
	In many applications, the evolution of a curve is governed by the minimization of curvature-dependent bending energy, which drives the curve toward smoother and energetically favorable configurations \cite{Helfrich1973}. 
	The Willmore energy, originally introduced in differential geometry, provides a fundamental curvature-based energy model for planar curve evolution. Its planar curve formulation has been extensively studied and serves as an important framework for curve smoothing, shape optimization, and geometric evolution problems 
	\cite{Deckelnick02,Dziuk2008,Barrett08}.

	The planar Willmore flow is the $L^2$-gradient flow of the bending energy \cite{Barrett08,Dziuk2002}. 
	For a planar curve $\Gamma$, the Willmore energy is defined by
	\begin{align} \label{willmore_W}
		W(\Gamma)=\frac{1}{2}\int_\Gamma \kappa^2 ds,
	\end{align}
	where $\kappa$ denotes the curvature and $ds$ represents the arclength element. 
	Let $\mathbf{X}(s,t)$ be a parametrization of the evolving curve $\Gamma(t)$. 
	The normal velocity of the Willmore flow is given by
	\cite{BARRETT2020275}
	\begin{align*}
		\partial_t\mathbf{X}\cdot\mathbf{n}
		=V,
		\qquad
		V=\partial_{ss}\kappa+\frac{1}{2}\kappa^3 ,
	\end{align*}
	where $V$ denotes the normal velocity and $\partial_s$ represents differentiation with respect to arclength.
	As a fourth-order geometric evolution equation, the planar Willmore flow involves high-order curvature derivatives, which introduce severe stiffness and bring challenges to stable and efficient numerical discretization. Moreover, maintaining a high-quality mesh distribution during the evolution is crucial for accurately resolving regions with significant curvature variation.
	
	Several numerical approaches have been developed for the Willmore flow, particularly for its planar curve formulation.
	For planar curves, Rusu \cite{Rusu2005} formulated a mixed weak problem by introducing curvature as an additional unknown and constructed a finite element approximation for the Willmore flow.
	Dziuk \cite{DeckelnickDziuk2006,Dziuk2008} developed finite element approximations for Willmore flows based on variational formulations and auxiliary variables, providing important foundations for the numerical treatment of fourth-order geometric evolution equations. 
	Barrett, Garcke, and N\"urnberg proposed a parametric finite element formulation (commonly referred to as the BGN method) that avoids explicit mesh redistribution procedures or remeshing. Specifically, the weak formulation naturally generates a tangential velocity that drives mesh nodes toward an approximately equidistributed configuration along the curve, thereby reducing mesh distortion during the evolution \cite{Barrett07,Barrett08JCP,barrett2008parametric}. 
	More recently, Bao et al. \cite{Bao2025} developed a fully discrete parametric finite element method (PFEM) for the planar Willmore flow and proved unconditional energy stability based on newly derived geometric identities. 
	Garcke et al. proposed an energy-stable PFEM for Willmore flow based on the normal-tangential velocity splitting approach. The tangential velocity improves mesh quality without remeshing; furthermore, in the axisymmetric case, the curvature of the generating curve is used as a Lagrange multiplier to achieve discrete equidistribution of mesh points \cite{GarckeNurnbergZhao2020,GarckeNurnbergZhao2025,GarckeNurnbergZhao2026}.
	For Willmore flow problems, previous studies have primarily focused on two key aspects: preserving energy stability and maintaining mesh quality. In particular, the BGN method exploits the tangential motion naturally induced by its discrete formulation, enabling mesh points to evolve toward an approximately equidistributed configuration and thereby maintaining a high-quality parametrization throughout the evolution. In addition to the BGN method, Elliott and Fritz proposed a novel reparametrization strategy based on special solutions of the harmonic map heat flow. By incorporating an intrinsic tangential velocity via the DeTurck trick, their approach effectively redistributes mesh points and significantly improves mesh quality during geometric evolution \cite{m2017approximations}.
	Jiang et al.~\cite{Jiang2026Structure} proposed an energy-stable PFEM for planar elastic flows, where a Lagrange multiplier formulation was introduced to preserve the energy dissipation law and an explicit tangential velocity was employed to improve mesh quality.
	Although these approaches have demonstrated satisfactory mesh quality for various geometric evolution problems, their performance may deteriorate when the evolving curve develops regions with large curvature or strongly localized geometric variations. In such cases, adaptive moving mesh methods and mesh redistribution strategies provide effective approaches for dynamically controlling the spatial distribution of mesh points.
	
	A broad class of adaptive moving mesh methods based on the equidistribution principle has been developed \cite{budd2009adaptivity,Huang2011Adaptive,huang1994moving}. Huang et al. \cite{huang1994moving} investigated moving mesh partial differential equations derived from the equidistribution principle and analyzed their stability and mesh non-crossing properties. Li et al. \cite{LI2001562} proposed an adaptive moving mesh framework based on harmonic maps, where mesh redistribution is achieved by constructing a harmonic mapping between the physical and computational domains while keeping the number of mesh nodes unchanged. Mackenzie et al. \cite{Mackenzie2019} further developed an adaptive moving mesh method for the forced mean curvature flow, in which the tangential velocity is determined through an equidistribution principle and enables effective mesh concentration in regions with large curvature.
	Although adaptive moving mesh methods have been extensively studied for a variety of curvature-driven geometric evolution problems, those specifically designed for the Willmore flow remain relatively scarce. Consequently, developing adaptive moving mesh methods that can accommodate the fourth-order nature of the Willmore flow, maintain high-quality meshes under strong curvature variations, and remain robust in long-time simulations continues to be a significant challenge.
	
	In addition to maintaining good mesh quality, designing numerical discretizations for geometric flows that preserve the energy stability property remains a challenging problem. 
	The Lagrange multiplier approach provides an efficient structure-preserving framework for constructing energy-stable schemes while maintaining the original gradient flow structure. 
	Garcke et al.~\cite{garcke2025structure} developed novel structure-preserving PFEMs based on a Lagrange multiplier formulation, which preserve the geometric properties of mean curvature and surface diffusion flows at the fully discrete level. However, directly incorporating this Lagrange multiplier formulation into the adaptive moving mesh framework for the Willmore flow leads to ill-posed nonlinear systems, with the associated iterative solvers consistently failing to converge in our extensive numerical experiments. The new Lagrange multiplier approach proposed by Cheng et al.~\cite{Cheng2020NewLM} also encounters this difficulty when applied to the adaptive moving mesh formulation for the Willmore flow and is therefore not directly applicable in this setting.
	More recently, relaxed Lagrange multiplier approaches \cite{Jing2026RLM,Zhang2026RLMPF} have been developed for a variety of gradient flows and  more general dissipative systems. By introducing relaxation mechanisms, these approaches improve the solvability and robustness of numerical schemes while preserving the energy stability of the underlying gradient flows. Motivated by \cite{Zhang2026RLMPF}, we develop  energy-stable, adaptive moving mesh methods for the planar Willmore flow by incorporating the relaxed Lagrange multiplier approach into the adaptive moving mesh methods. The resulting methods not only maintain high-quality mesh distributions throughout the evolution but also admit a rigorous proof of energy stability at the fully discrete level, making them well suited for accurate and robust long-time simulations of the planar Willmore flow.
	
	The main contributions of this work are summarized as follows:
	\begin{itemize}
		
		\item 
		Based on the $k$th-order backward differentiation formula (BDF$k$) 
		in time and second-order centered finite differences in space, we construct a family of finite difference methods (FDMs) for the Willmore flow, referred to as BDF$k$-FDMs. These methods provide an efficient framework for the numerical approximation of the fourth-order geometric evolution equation. 
		However, for curves with complex geometric structures or strong curvature variations, maintaining mesh quality during long-time evolution remains a challenging issue.
		
		\item
		By incorporating a tangential velocity derived from the variational derivative of a mesh functional associated with a monitor function into the normal velocity equation, we establish a new total velocity formulation for the planar Willmore flow. The resulting formulation leads to an adaptive moving mesh system, in which mesh redistribution is intrinsically coupled with the geometric evolution rather than achieved through explicit reparametrization. 
		Here, the 
		monitor function, constructed from geometric quantities such as curvature 
		and curvature variation, enables the mesh to automatically adjust according 
		to local geometric features while preserving mesh regularity. 
		Based on this adaptive formulation, we construct adaptive BDF$k$-FDMs 
		(A-BDF$k$-FDMs) for the numerical solution of the resulting system.
		
		\item
		We incorporate a new type of relaxed Lagrange multiplier approach into the proposed adaptive moving mesh framework to enhance the energy stability of long-time simulations. Based on this formulation, we develop fully discrete energy-stable schemes, termed the A-RLM-BDF$k$-FDMs ($k=1,2$). 
		\item  
		An adaptive monitor function selection strategy is developed to automatically determine suitable mesh indicators according to the geometric characteristics of the evolving curve. The proposed strategy can be seamlessly incorporated into the A-BDF$k$-FDMs, the A-LM-BDF$k$-FDMs, and the A-RLM-BDF$k$-FDMs without modifying their algorithmic framework. In addition, an alternative
		adaptive 
		 weighted arc-length redistribution (A-WAR) algorithm is proposed to further improve mesh quality and computational efficiency.
	\end{itemize}
	
	The remainder of the paper is organized as follows. Section \ref{sec2} presents the parametric formulation of the planar Willmore flow and develops the corresponding BDF$k$-FDMs. The mesh properties and limitations of these schemes are also discussed. Section \ref{sec3} introduces an adaptive moving mesh formulation based on a tangential velocity and develops the corresponding A-BDF$k$-FDMs. Section \ref{sec4} presents the relaxed Lagrange multiplier formulation and develops the corresponding energy-stable A-RLM-BDF$k$-FDMs.
	Numerical experiments are reported in Section \ref{sec5}, followed by 
	conclusions and future perspectives in Section \ref{sec6}. Appendix A provides the geometric motivation for the construction of the monitor functions, and Appendix B further presents the adaptive monitor selection strategy and the A-WAR strategy.

	\section{BDF$k$-FDMs}\label{sec2}
	
	In this section, we first recall the classical continuous formulation of the planar Willmore flow. Then, we develop its fully discrete FDMs, 
	based on temporal BDF$k$ discretizations. 
	
	\subsection{Continuous formulation} \label{model_notation}
	Let $\Gamma(t)\subset\mathbb{R}^2$ denote a time-dependent smooth planar curve.
	Throughout the evolution, we represent $\Gamma(t)$ by a parametrized map
	\begin{equation*}
		\mathbf{X}(\cdot,t):\mathbb{T}\to\mathbb{R}^2,
		\qquad
		\rho \mapsto \mathbf{X}(\rho,t)
		= \big(x(\rho,t),\, y(\rho,t)\big)^{T},
	\end{equation*}
	where $\mathbb{T}:=\mathbb{R}/\mathbb{Z}=[0,1]$ is the periodic reference domain.
	The initial curve is parametrized by $\mathbf{X}(\cdot,0)=\mathbf{X}^0(\cdot).$
	Based on this parametrization, the arc-length coordinate is introduced as $s(\rho,t)=\int_{0}^{\rho}|\partial_q\mathbf{X}(q,t)|\,dq.$ With this definition, differentiation with respect to arc length is written as $\partial_s = \frac{1}{|\partial_\rho\mathbf{X}|}\,\partial_\rho$, and the corresponding arc-length element becomes $ds = |\partial_\rho\mathbf{X}|\,d\rho$. In addition, the tangent and normal vectors associated with the curve $\Gamma(t)$ are defined as follows
	\begin{equation} \label{eq:normal}
		\boldsymbol{\tau} := \boldsymbol{\tau}(\rho, t) = \partial_s \mathbf{X}(\rho, t) = \frac{\partial_\rho \mathbf{X}(\rho, t)}{|\partial_\rho \mathbf{X}(\rho, t)|}, \quad \mat{n} := \mat{\mat{n}}(\rho,t) = -\mat{\tau^{\bot}},
	\end{equation}
	where $\bot$ denotes the clockwise rotation by $\frac{\pi}{2}$.
	
	Then, the Willmore flow can be written as the following fourth-order geometric system. 
	Given $\mathbf{X}(\rho,0)=\mathbf{X}^0(\rho)$, we find 
	$\left(\mathbf{X}(\rho, t), V(\rho, t), \kappa(\rho, t)\right)$, $(\rho, t)\in\mathbb{T}\times (0, +\infty)$, such that 
	\begin{subequations}\label{WM}
		\begin{align}
			&\partial_t \mathbf{X} \cdot \mat{n} = V,\label{WM_a} \\[4pt]
			&V = \partial_{ss}\kappa + \frac{1}{2}\kappa^{3} = \frac{\partial_{\rho \rho} \kappa}{\left|\partial_{\rho}\mathbf{X}\right|^2} - \frac{\partial_{\rho} \kappa \,\partial_{\rho}\mathbf{X} \cdot \partial_{\rho \rho} \mathbf{X}}{\left|\partial_{\rho}\mathbf{X}\right|^4} + \frac{1}{2}\kappa^{3}, \label{WM_b}\\[4pt]
			&\kappa \,\mat{n} = -\,\partial_{ss}\mathbf{X} = -\frac{\partial_{\rho \rho} \mathbf{X}}{\left|\partial_{\rho}\mathbf{X}\right|^2} + \frac{\boldsymbol{\tau} \cdot \partial_{\rho\rho}\mathbf{X}}{\left|\partial_{\rho}\mathbf{X}\right|^3}\partial_{\rho}\mathbf{X}.\label{WM_c}
		\end{align}    
	\end{subequations}
	\begin{rem}
		Since the continuous formulation \eqref{WM} coincides with that of the BGN framework, the resulting FDMs naturally yield approximately equidistributed meshes. The sole distinction between the present work and the classical BGN methods lies in the adoption of FDMs rather than FEMs.
		In the next subsection, we construct the corresponding FDMs and examine their mesh-preserving properties through simple numerical experiments. We also discuss several issues that may arise in more complex settings. 
	\end{rem}
	
	\subsection{Finite difference discretizations}\label{BDFk_FDM}
	In this subsection, we present several finite difference discretizations of the Willmore flow \eqref{WM}.
	Let $h := \frac{1}{M}$ and $\Delta t := \frac{T}{N}$ denote the spatial grid size and the time step, where $M,N \in \mathbb{N}$. We introduce the uniform spatial partition
	$ \rho_i := ih$, for $i=0,1,\dots,M-1,$ with the periodicity condition $\rho_M\equiv\rho_0$, and the temporal grid $t^n := n\Delta t, n=0,1,2,\dots,N$. 
	The discrete curve at time $t^n$ is then given by the nodal positions
	$\mathbf{X}_i^n$, which approximate $\mathbf{X}(\rho_i,t^n) \in \mathbb{R}^2$, for $i = 0,1,\dots,M-1$ and $n=0,1,2,\dots,N$.
	
	To prepare the finite difference discretizations, we introduce the discrete
	derivative operators along the periodic parameter~$\rho$.
	Let $f(\rho)$ denote a scalar- or vector-valued function sampled at the
	uniform grid points $\rho_i$ with periodicity $f(\rho_{i+M})=f(\rho_i)$.
	The forward, backward, centered first-order differences, together with the
	standard centered second-order difference used for curvature and other higher-order geometric quantities, are given by
	\begin{align*}
		\delta_{\rho}^+ f(\rho_i) &= \frac{f(\rho_i+h)-f(\rho_i)}{h}, \qquad 
		\delta_{\rho}^- f(\rho_i) = \frac{f(\rho_i)-f(\rho_i-h)}{h}, \\[4pt]
		\delta_{\rho} f(\rho_i)
		&= \frac{f(\rho_i+h)-f(\rho_i-h)}{2h}, \qquad
		\delta_{\rho\rho} f(\rho_i)
		= \frac{f(\rho_i+h)-2f(\rho_i)+f(\rho_i-h)}{h^2},
	\end{align*}
	where periodic extension is assumed, and all operators act componentwise when $f$ is vector-valued.
	In the discrete settings, all geometric quantities introduced in Section~\ref{model_notation}
	are approximated using finite difference operators along the periodic
	parameter~$\rho$. In particular, the discrete tangent derivative is defined
	by the centered difference
	\begin{align*}
		\delta_\rho \mathbf{X}_i^n :=
		\frac{\mathbf{X}_{i+1}^n - \mathbf{X}_{i-1}^n}{2h}\approx \partial_\rho \mathbf{X}(\rho_i,t^n),    
	\end{align*}
	and then the unit tangent and normal vectors are defined by 
	\begin{align*}
		\boldsymbol{\tau}_i^n
		:=\frac{\delta_\rho \mathbf{X}_i^n}{\lvert\delta_\rho \mathbf{X}_i^n\rvert},
		\qquad
		\boldsymbol{n}_i^n := R\,\boldsymbol{\tau}_i^n,
		\quad
		R=\begin{pmatrix}0&-1\\[2pt]1&0\end{pmatrix}.  
	\end{align*}
	
	We discretize the spatial derivatives using second-order centered finite 
	differences, and approximate the temporal derivative by using the BDF$k$ method. 
	Since the BDF$k$ method requires the solution values from the previous $k-1$ 
	time levels, the time integration is initialized by lower-order BDF schemes.
	Combining these approximations yields a family of BDF$k$ finite difference approximations for the parametric Willmore flow~\eqref{WM}, hereafter referred to 
	as the BDF$k$-FDMs. For given 
	$\left\{\left(\mathbf{X}_i^{\,n-p}, V_i^{\,n-p}, \kappa_i^{\,n-p}\right)\right\}_{p=0}^{k-1}$, the BDF$k$-FDM seeks the updated solution $\left(\mathbf{X}_i^{\,n+1}, V_i^{\,n+1}, \kappa_i^{\,n+1}\right)$, 
	such that
	\begin{subequations}\label{FD-BDFk}
		\begin{align}
			&\frac{\sum_{p=0}^{k}\alpha_p \mathbf{X}_i^{n+1-p}}{\Delta t}\cdot 
			\mat{n}_i^{\,n+1}
			= V_i^{\,n+1}, \\[4pt]
			&V_i^{\,n+1} = 
			\frac{\delta_{\rho\rho}\kappa_i^{\,n+1}}{\left|\delta_{\rho}\mathbf{X}_i^{\,n+1}\right|^2}
			-
			\frac{\delta_{\rho}\kappa_i^{\,n+1}\,
				\delta_{\rho}\mathbf{X}_i^{\,n+1}\cdot
				\delta_{\rho\rho}\mathbf{X}_i^{\,n+1}}
			{\left|\delta_{\rho}\mathbf{X}_i^{\,n+1}\right|^4}
			+ \frac12 \left(\kappa_i^{\,n+1}\right)^3, \\[4pt]
			&\kappa_i^{\,n+1}\mat{n}_i^{\,n+1}
			= -\frac{\delta_{\rho\rho}\mathbf{X}_i^{\,n+1}}{\left|\delta_{\rho}\mathbf{X}_i^{\,n+1}\right|^2}
			+
			\frac{\boldsymbol{\tau}_i^{\,n+1}\cdot 
				\delta_{\rho\rho}\mathbf{X}_i^{\,n+1}}
			{\left|\delta_{\rho}\mathbf{X}_i^{\,n+1}\right|^{3}}
			\,\delta_{\rho}\mathbf{X}_i^{\,n+1},
		\end{align}
	\end{subequations}
	where the coefficients $\{\alpha_p\}_{p=0}^k$ correspond to the $k$-step BDF$k$ method.  
	For instance, the coefficients of several commonly used BDF$k$ methods are denoted by 
	\begin{align*}
		&\text{BDF1 }(k=1): 
		&& \alpha_0 = 1,\quad \alpha_1 = -1; \\[4pt]
		&\text{BDF2 }(k=2): 
		&& \alpha_0 = \tfrac{3}{2},\quad \alpha_1 = -2,\quad \alpha_2 = \tfrac{1}{2}; \\[4pt]
		&\text{BDF3 }(k=3): 
		&& \alpha_0 = \tfrac{11}{6},\quad \alpha_1 = -3,\quad \alpha_2 = \tfrac{3}{2},\quad \alpha_3 = -\tfrac{1}{3}; \\[4pt]
		&\text{BDF4 }(k=4): 
		&& \alpha_0 = \tfrac{25}{12},\quad \alpha_1 = -4,\quad \alpha_2 = 3,\quad 
		\alpha_3 = -\tfrac{4}{3},\quad \alpha_4 = \tfrac{1}{4}. 
	\end{align*}
	
	To treat the nonlinearity in \eqref{FD-BDFk}, we adopt a Picard-type 
	iterative strategy at each time step. 
	Given the iteration $\left(\mathbf{X}_i^{\,n+1,m},\, V_i^{\,n+1,m},\, \kappa_i^{\,n+1,m}\right)$, 
	the next iteration 
	$\left(\mathbf{X}_i^{\,n+1,m+1},\, V_i^{\,n+1,m+1},\, \kappa_i^{\,n+1,m+1}\right)$
	is obtained by solving
	\begin{subequations}
		\begin{align}
			&\frac{\alpha_0 \mathbf{X}_i^{\,n+1,m+1} +\sum_{p=1}^k \alpha_p \mathbf{X}_i^{\,n+1-p}}{\Delta t}
			\cdot \mat{n}_i^{\,n+1,m}
			= V_i^{\,n+1,m+1},  \\
			&V_i^{\,n+1,m+1}
			= \frac{\delta_{\rho\rho}\kappa_i^{\,n+1,m+1}}
			{\left|\delta_\rho \mathbf{X}_i^{\,n+1,m}\right|^2}
			- \frac{\delta_\rho \kappa_i^{\,n+1,m}\,
				\delta_\rho\mathbf{X}_i^{\,n+1,m}
				\cdot 
				\delta_{\rho\rho}\mathbf{X}_i^{\,n+1,m+1}}
			{\left|\delta_\rho\mathbf{X}_i^{\,n+1,m}\right|^4}
			+ \frac{1}{2}\big(\kappa_i^{\,n+1,m}\big)^2\,
			\kappa_i^{\,n+1,m+1},\\[4pt]
			&\kappa_i^{\,n+1,m+1}\,\mat{n}_i^{\,n+1,m}
			= -\frac{\delta_{\rho\rho}\mathbf{X}_i^{\,n+1,m+1}}
			{\left|\delta_\rho\mathbf{X}_i^{\,n+1,m}\right|^2}
			+ \frac{\boldsymbol{\tau}_i^{\,n+1,m}
				\cdot 
				\delta_{\rho\rho}\mathbf{X}_i^{\,n+1,m+1}}
			{\left|\delta_\rho\mathbf{X}_i^{\,n+1,m}\right|^3}
			\delta_\rho\mathbf{X}_i^{\,n+1,m+1}.
		\end{align}
	\end{subequations}
	The iteration is terminated once
	\[
	\max_i\!
	\left(
	\left\|\mathbf{X}_i^{\,n+1,m+1}-\mathbf{X}_i^{\,n+1,m}\right\|
	+ \left|V_i^{\,n+1,m+1}-V_i^{\,n+1,m}\right|
	+ \left|\kappa_i^{\,n+1,m+1}-\kappa_i^{\,n+1,m}\right|
	\right)
	< \varepsilon_{\mathrm{tol}},
	\]
	where $\varepsilon_{\mathrm{tol}}$ is a prescribed tolerance.  
	Upon convergence, we set
	\[
	\left(\mathbf{X}_i^{\,n+1},\,V_i^{\,n+1},\,\kappa_i^{\,n+1}\right)
	= \left(\mathbf{X}_i^{\,n+1,m+1},\,V_i^{\,n+1,m+1},\,\kappa_i^{\,n+1,m+1}\right). 
	\]
	
	\begin{rem} \label{BDFkmesh}
		Numerical results shown in Figs.~\ref{fig:BGN_good}–\ref{fig:BGN_bad} indicate that the BDF$k$-FDM generally maintains high mesh quality and produces stable and reliable results for nearly convex or mildly perturbed curves with smooth geometric variation, as illustrated in Fig.~\ref{fig:BGN_good}. However, when the evolving curve contains more intricate geometric features or undergoes rapid curvature changes, as in the examples of Fig.~\ref{fig:BGN_bad}, the mesh may gradually develop noticeable stretching, clustering, or other forms of nonuniformity during the evolution. For strongly oscillatory interfaces with sharp tips or drastic curvature transitions, the geometric quantities become highly sensitive from the very beginning, making the discrete system strongly nonlinear and stiff; in such cases, pronounced mesh tangling and even numerical breakdown may occur. These observations highlight the necessity of incorporating more effective mesh redistribution strategies in order to accurately capture the evolution of geometrically complex interfaces.
		\begin{figure}[!t]
			\centering
			\includegraphics[width=0.292\linewidth]{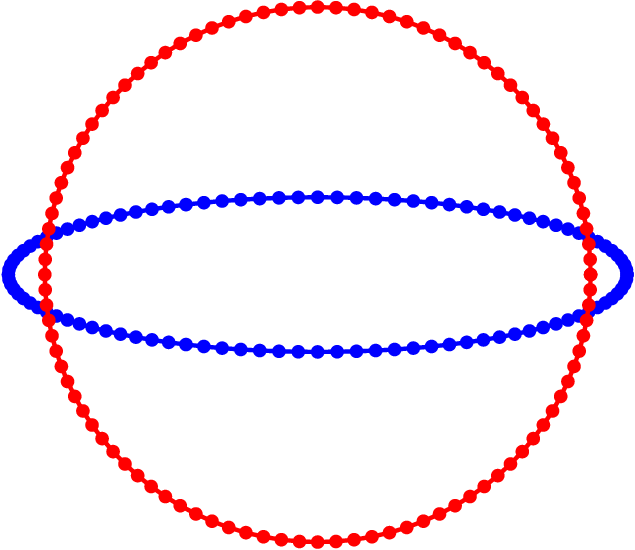}
			\hspace{0.12\textwidth}  
			\includegraphics[width=0.26\linewidth]{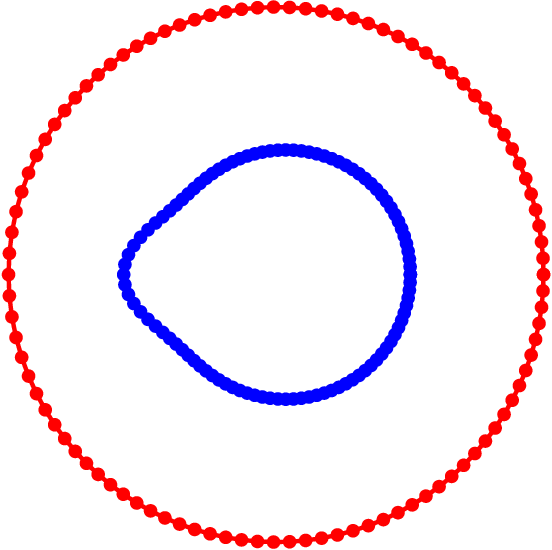}
			\caption{Representative examples showing well-maintained mesh quality. Blue curves represent the initial curves, and red curves represent the evolved curves at the final time. Left: the initial curves are given by $x = 4\cos(2\pi\rho), y = \sin(2\pi\rho)$, simulated with $k=1$, $M=100$, $T=50$, $\Delta t=0.01$. Right: the initial curves are given by $x = 4 + \left[1 + 0.3\exp\left(-\frac{(2\pi\rho - \pi)^{2}}{0.16}\right)\right]\cos (2\pi\rho), y = \left[1 + 0.3\exp\left(-\frac{(2\pi\rho - \pi)^{2}}{0.16}\right)\right]\sin (2\pi\rho)$, simulated with $k=1$, $M=100$, $T=10$, $\Delta t=0.01$.}
			\label{fig:BGN_good}
		\end{figure}
		\begin{figure}[!t]
			\centering
			\includegraphics[width=0.26\linewidth]{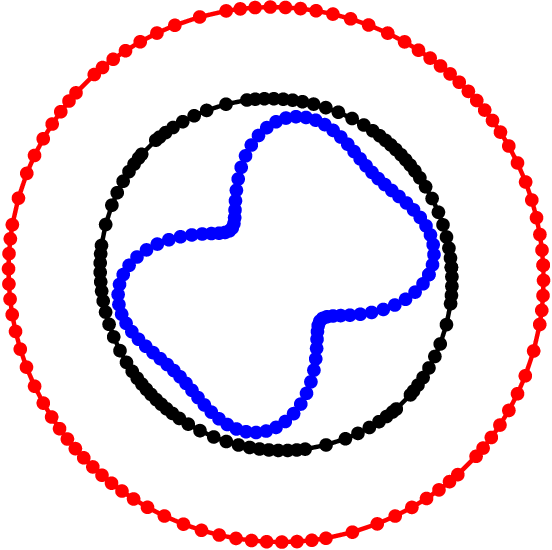}
			\hspace{0.05\textwidth} 
			\includegraphics[width=0.26\linewidth]{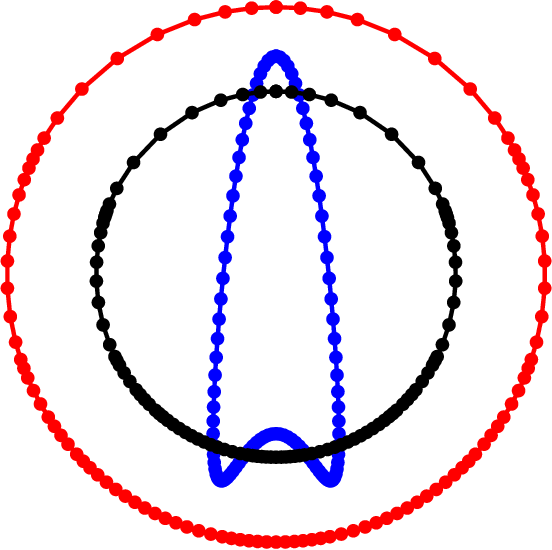}
			\hspace{0.05\textwidth} 
			\includegraphics[width=0.265\linewidth]{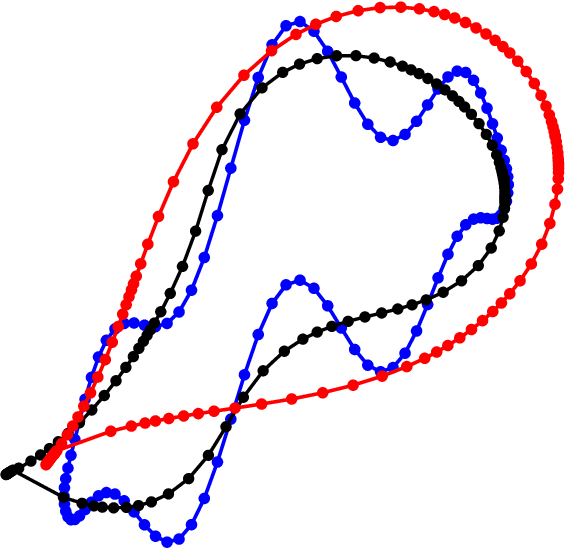}
			\caption{Representative examples showing mesh degradation under complex interface geometries. The blue, black, and red curves represent the initial curve, the evolved curve at the mid-time, and the final curve, respectively. Left: the initial curves are given by $x = \cos(2\pi\rho)\left[1 + 0.3\sin(4\pi\rho) + 0.2\cos(8\pi\rho)\right], y = \sin(2\pi\rho)\left[1 + 0.3\sin(4\pi\rho) + 0.2\cos(8\pi\rho)\right]$, simulated with $k=1$, $M=100$, $T=10$, $\Delta t=0.01$. Middle: the initial curves are given by $x = 0.5\sin(2\pi\rho), y = 1.5\cos(2\pi\rho)\left[1+\cos(2\pi\rho)\right]$, simulated with $k=1$,$M=100$, $T=10$, $\Delta t=0.01$. Right: the initial curves are given by $x = 1.2 \cos\left( 2\pi \rho \right), y = 0.5 \sin\left( 2\pi \rho \right)+ \sin\left( \cos\left( 2\pi \rho \right) \right)
				+ \sin\left( 2\pi \rho \right)\left[0.2 + \sin\left( 2\pi \rho \right)\sin^2\left( 6\pi\rho \right)\right]$, simulated with $k=1$, $M=100$, $T=1$, $\Delta t=0.0001$.} 
			\label{fig:BGN_bad}
		\end{figure}
		
		These observations highlight the necessity of developing more effective mesh redistribution strategies to accurately resolve geometrically complex evolutions. Motivated by this, we introduce in the following section an adaptive moving mesh formulation based on a tangential velocity, where the mesh redistribution is intrinsically coupled with the geometric evolution rather than performed through an additional remeshing procedure.
		
		
	\end{rem}

	\section{A-BDF$k$-FDMs}\label{sec3}
	In order to improve mesh quality during the evolution, mesh adaptivity can be
	achieved intrinsically by incorporating a suitable tangential velocity into
	the evolution equation. Such a tangential component does not alter the
	geometric motion of the evolving curve, but allows the parametrization to
	adjust continuously according to the local geometric features. Consequently,
	the mesh redistribution is intrinsically coupled with the geometric evolution without requiring an additional remeshing procedure. In this section, we introduce a tangential velocity into the Willmore flow formulation and develop an adaptive moving mesh framework, leading to the corresponding A-BDF$k$-FDMs.
	
	\subsection{The new coupled system}
	Based on the equidistribution principle, we derive an explicit expression for the tangential velocity component that drives the mesh toward the desired adaptive distribution while preserving the normal evolution law. The underlying idea is that the parametrization is allowed to evolve in the tangential direction, whereas the normal component is determined solely by the geometric evolution law.
	
	We aim to redistribute mesh points along the evolving curve \(\mathbf{X}(\rho,t)\) according to a prescribed monitor function \(M_f(\kappa, \kappa_s, t) > 0\). Let \(s(\rho,t)\) denote the arc-length coordinate, as defined in Section~\ref{model_notation}. From the definition of \(s\), we have the relationship
	\[
	\partial_\rho s = \left|\partial_\rho \mathbf{X}\right|.
	\]
	The equidistribution principle requires the weighted arc-length element \(M_f \, \mathrm{d}s\) to be uniformly distributed in \(\rho\). In other words, there exists a time-dependent constant \(C(t)\) such that
	\[
	M_f(\kappa, \kappa_s, t) \, \partial_\rho s(\rho,t) = C(t).
	\]
	By integrating this equation over the reference interval \([0,1]\), we obtain
	\[
	C(t) = \int_0^1 M_f(\kappa, \kappa_s, t) \, \partial_\rho s \, \mathrm{d}\rho = \int_{\Gamma(t)} M_f(\kappa, \kappa_s, t) \, \mathrm{d}s.
	\]
	Differentiating this equation with respect to \(\rho\) gives the differential form of the equidistribution condition:
	\[
	\partial_\rho\left(M_f(\kappa, \kappa_s, t) \, \partial_\rho s\right) = 0.
	\]
	
	To derive the tangential velocity that enforces this condition, we introduce the inverse parametrization \(\rho = \rho(s,t)\) and define the functional
	\[
	\mathcal{I}[\rho] = \frac{1}{2} \int_{\Gamma(t)} \frac{1}{M_f} \, \left(\partial_s \rho\right)^2 \, \mathrm{d}s.
	\]
	This choice of functional is motivated by the observation that if \(\partial_s \rho\) is proportional to \(M_f\), the computational parameter will allocate more arc-length to regions where \(M_f\) is larger, thus increasing the mesh density in these regions, which is precisely the desired effect of equidistribution.

	\begin{lemma}\label{lem:dts}
		The evolution of the arc-length coordinate \(s\) required to enforce the equidistribution condition is given by
		\[
		\partial_t s = - \frac{1}{\partial_s \rho} \, \frac{P}{\mathcal{J}} \, \partial_s \left( \frac{1}{M_f} \, \partial_s \rho \right),
		\]
		where \(\mathcal{J} > 0\) is a relaxation time constant, and \(P\) is a positive operator that controls the smoothing properties of the redistribution process.
	\end{lemma}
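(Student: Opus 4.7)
The plan is to derive the evolution equation for the arc-length coordinate $s$ as a preconditioned gradient flow of the functional $\mathcal{I}[\rho]$, and then to convert the resulting flow for the reparametrization $\rho(s,t)$ into a flow for $s(\rho,t)$ via a chain-rule identity between the two inverse maps.

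First, I would compute the first variation of $\mathcal{I}[\rho]$ at a fixed time $t$, treating the curve $\Gamma(t)$ as a frozen geometric object so that $\mathrm{d}s$ is independent of $\rho$. For an admissible periodic perturbation $\eta$,
\begin{equation*}
  \frac{\mathrm{d}}{\mathrm{d}\epsilon}\bigg|_{\epsilon=0}\mathcal{I}[\rho+\epsilon\eta]
  = \int_{\Gamma(t)} \frac{1}{M_f}\,\partial_s\rho\,\partial_s\eta\,\mathrm{d}s
  = -\int_{\Gamma(t)} \eta\,\partial_s\!\left(\frac{1}{M_f}\partial_s\rho\right)\mathrm{d}s,
\end{equation*}
with no boundary terms since $\Gamma(t)$ is closed. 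The $L^2$-gradient with respect to arc-length is therefore
\begin{equation*}
  \frac{\delta\mathcal{I}}{\delta\rho} = -\,\partial_s\!\left(\frac{1}{M_f}\partial_s\rho\right).
\end{equation*}
Note that the stationary points of this functional satisfy $\tfrac{1}{M_f}\partial_s\rho \equiv \text{const}$, which is exactly the equidistribution condition $M_f\,\partial_\rho s = C(t)$ derived earlier; thus minimizing $\mathcal{I}$ drives the parametrization toward equidistribution.

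Second, I would postulate the preconditioned gradient flow for $\rho$ with relaxation constant $\mathcal{J}>0$ and positive operator $P$,
\begin{equation*}
  \partial_t\rho\,\big|_s
  = -\frac{P}{\mathcal{J}}\,\frac{\delta\mathcal{I}}{\delta\rho}
  = \frac{P}{\mathcal{J}}\,\partial_s\!\left(\frac{1}{M_f}\partial_s\rho\right),
\end{equation*}
which guarantees $\tfrac{\mathrm{d}}{\mathrm{d}t}\mathcal{I}\le 0$ along the evolution. I would then convert this to an equation for $s(\rho,t)$. Differentiating the identity $s(\rho(s,t),t)=s$ with respect to $t$ at fixed $s$ gives
\begin{equation*}
  \partial_\rho s\cdot\partial_t\rho\,\big|_s + \partial_t s\,\big|_\rho = 0,
\end{equation*}
and, using $\partial_\rho s = 1/\partial_s\rho$, this yields $\partial_t s|_\rho = -(1/\partial_s\rho)\,\partial_t\rho|_s$. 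Substituting the gradient flow expression for $\partial_t\rho$ produces
\begin{equation*}
  \partial_t s = -\frac{1}{\partial_s\rho}\,\frac{P}{\mathcal{J}}\,\partial_s\!\left(\frac{1}{M_f}\partial_s\rho\right),
\end{equation*}
which is the claimed identity.

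The main obstacle is conceptual rather than computational: one must carefully distinguish which variable is held fixed under each partial derivative (the label $\rho$ versus the arc-length $s$) and justify that, for the variational calculation, the curve $\Gamma(t)$ can be held instantaneously fixed so that the measure $\mathrm{d}s$ does not itself depend on $\rho$. A secondary technical point is to specify the precise function space in which $P$ acts as a symmetric, positive smoothing operator, so that the preconditioned flow retains the energy-dissipation property; once $P$ is fixed (for instance as the identity or a shifted Laplacian), this is routine.
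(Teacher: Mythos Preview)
Your proposal is correct and follows essentially the same route as the paper: compute the $L^2$-gradient of $\mathcal{I}[\rho]$ via integration by parts, postulate the preconditioned gradient flow $\partial_t\rho = -\tfrac{P}{\mathcal{J}}\tfrac{\delta\mathcal{I}}{\delta\rho}$, and then convert to an equation for $\partial_t s$ using the inverse-map chain rule. Your handling of the chain-rule step (differentiating $s(\rho(s,t),t)=s$ at fixed $s$) is in fact slightly cleaner than the paper's exposition, which phrases the same identity a bit loosely.
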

	
	\begin{proof}
		To compute the variational derivative of the functional \(\mathcal{I}[\rho]\), we perturb \(\rho\) by \(\rho + \varepsilon \eta\), where \(\eta\) is an arbitrary variation that vanishes at the boundaries. Applying integration by parts, we obtain
		\[
		\frac{\mathrm{d}}{\mathrm{d}\varepsilon} \mathcal{I}[\rho + \varepsilon \eta] \Big|_{\varepsilon = 0} = \int_{\Gamma(t)} \frac{1}{M_f} (\partial_s \rho)(\partial_s \eta) \, \mathrm{d}s = - \int_{\Gamma(t)} \partial_s \left( \frac{1}{M_f} \, \partial_s \rho \right) \eta \, \mathrm{d}s.
		\]
		Since this identity holds for all admissible variations \(\eta\), we conclude that the variational derivative is:
		\[
		\frac{\delta \mathcal{I}}{\delta \rho} = - \partial_s \left( \frac{1}{M_f} \, \partial_s \rho \right).
		\]
		For stationary points of the functional \(\mathcal{I}\), the Euler-Lagrange equation must hold
		\[
		\partial_s \left( \frac{1}{M_f} \, \partial_s \rho \right) = 0,
		\]
		which corresponds to the differential equidistribution condition. 
		
		To drive the parametrization toward the equilibrium condition, we introduce a relaxation dynamics for the inverse mapping \(\rho(s,t)\). Specifically, by evolving \(\rho\) in the direction of the negative gradient of the functional \(\mathcal{I}\), we obtain 
		\begin{align}\label{x1}
			\partial_t \rho = - \frac{P}{\mathcal{J}} \, \frac{\delta \mathcal{I}}{\delta \rho} = \frac{P}{\mathcal{J}} \, \partial_s \left( \frac{1}{M_f} \, \partial_s \rho \right),
		\end{align}
		where \(\mathcal{J} > 0\) is a relaxation time constant, and \(P\) is a positive operator that controls the smoothing properties of the redistribution process. This evolution ensures that the functional \(\mathcal{I}\) decreases monotonically over time, and the parametrization asymptotically approaches the equidistribution condition.
		Then, by using the identity \(\rho = \rho(s(\rho,t),t)\), which is valid for all \((\rho,t)\), and differentiating with respect to \(s\) while keeping \(t\) fixed, we obtain
		\begin{align}\label{x2}
			\partial_\rho s = \frac{1}{\partial_s \rho}.
		\end{align}
		Differentiating \eqref{x2} with respect to \(t\), we derive 
		\begin{align}\label{x3}
			\partial_t s = - \frac{\partial_t \rho}{\partial_s \rho}.
		\end{align}
		Substituting \eqref{x1} into \eqref{x3} gives 
		\[
		\partial_t s = - \frac{1}{\partial_s \rho} \, \frac{P}{\mathcal{J}} \, \partial_s \left( \frac{1}{M_f} \, \partial_s \rho \right).
		\]
		Therefore, the evolution equation for the arc-length coordinate \(s\) is derived.
	\end{proof}
	
	Using Lemma \ref{lem:dts}, we derive the equation for the total velocity.
	\begin{lemma}\label{lem:dtX_tau}
		The total velocity of the curve evolution is given by
		\begin{align} \label{tangential_velocity}
			\partial_t \mathbf{X} 
			= V\mathbf{n} + \left[\frac{P}{\mathcal{J}} \, \left( M_f \, \left|\partial_\rho \mathbf{X}\right| \right)^{-2} \, \partial_\rho \left( M_f \, \left|\partial_\rho \mathbf{X}\right| \right)\right]\boldsymbol{\tau} .
		\end{align}
	\end{lemma}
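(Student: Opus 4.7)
The plan is to prove the lemma by decomposing $\partial_t\mathbf{X}$ into its normal and tangential components and then identifying each one via the results already established. Since the curve is moving in $\mathbb{R}^2$, any velocity at a fixed parameter $\rho$ admits the orthogonal decomposition
\[
\partial_t \mathbf{X} = (\partial_t \mathbf{X}\cdot \mathbf{n})\,\mathbf{n} + (\partial_t \mathbf{X}\cdot \boldsymbol{\tau})\,\boldsymbol{\tau}.
\]
The normal part is pinned down directly by the Willmore law \eqref{WM_a}, giving $(\partial_t\mathbf{X}\cdot\mathbf{n})\mathbf{n} = V\mathbf{n}$. So the entire task reduces to computing the tangential coefficient $u := \partial_t\mathbf{X}\cdot\boldsymbol{\tau}$ and verifying it equals the bracketed expression in the statement.

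For the tangential part, I would argue that $u$ coincides with the arc-length rate $\partial_t s$ produced by the reparametrization dynamics derived in Lemma~\ref{lem:dts}. The rationale is that the gradient flow \eqref{x1}, from which $\partial_t s$ is obtained, is a purely reparametrization flow on the instantaneous (frozen) curve: it changes the assignment $\rho\mapsto \mathbf{X}(\rho,t)$ without moving the image $\Gamma(t)$. On a fixed curve, a tangential velocity $u\boldsymbol{\tau}$ slides the label $\rho$ along the curve at speed $u$, so the arc-length coordinate of that label evolves at the same rate $u$. Combining this identification with the given $V\mathbf{n}$ for the normal component yields $\partial_t \mathbf{X} = V\mathbf{n} + (\partial_t s)\,\boldsymbol{\tau}$, and it only remains to rewrite the expression for $\partial_t s$ from Lemma~\ref{lem:dts} in $\rho$-coordinates.

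The conversion is a direct chain-rule computation using $\partial_s \rho = 1/|\partial_\rho\mathbf{X}|$ (which is \eqref{x2}) and $\partial_s = |\partial_\rho\mathbf{X}|^{-1}\partial_\rho$. First I substitute to obtain $\tfrac{1}{M_f}\partial_s\rho = (M_f|\partial_\rho\mathbf{X}|)^{-1}$. Differentiating in $s$ then gives
\[
\partial_s\!\left(\frac{1}{M_f}\partial_s\rho\right) = \frac{1}{|\partial_\rho\mathbf{X}|}\,\partial_\rho\!\left(\frac{1}{M_f|\partial_\rho\mathbf{X}|}\right) = -\frac{\partial_\rho(M_f|\partial_\rho\mathbf{X}|)}{|\partial_\rho\mathbf{X}|\,(M_f|\partial_\rho\mathbf{X}|)^{2}}.
\]
Multiplying by $-1/\partial_s\rho = -|\partial_\rho\mathbf{X}|$ and by $P/\mathcal{J}$, the two $|\partial_\rho\mathbf{X}|$ factors cancel and the signs combine, leaving exactly $(P/\mathcal{J})(M_f|\partial_\rho\mathbf{X}|)^{-2}\partial_\rho(M_f|\partial_\rho\mathbf{X}|)$. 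Inserted back into $u\boldsymbol{\tau}$, this produces the bracketed coefficient in the statement.

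The main obstacle is the conceptual step of identifying $u$ with $\partial_t s$ from Lemma~\ref{lem:dts}. A naive computation of $\partial_t s(\rho,t)=\partial_t\int_0^\rho|\partial_q\mathbf{X}|\,dq$ under the full velocity $V\mathbf{n}+u\boldsymbol{\tau}$ contains an additional curvature–normal contribution $-\int_0^\rho V\kappa\,|\partial_q\mathbf{X}|\,dq$, so the identification must be read as a definition stemming from the reparametrization viewpoint: the equidistribution-driving flow only acts tangentially on the current geometry, and its associated $\partial_t s$ is precisely the tangential speed assigned to each mesh label. Once this interpretation is made precise, the remainder of the argument is a bookkeeping exercise in changing variables from $s$ to $\rho$.
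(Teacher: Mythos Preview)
Your proposal is correct and follows essentially the same approach as the paper: identify the tangential velocity with $\partial_t s$ from Lemma~\ref{lem:dts}, convert that expression from $s$- to $\rho$-derivatives, and combine with the normal component $V\mathbf{n}$ from~\eqref{WM_a}. The paper packages the change of variables through the auxiliary quantity $Q:=M_f\,\partial_\rho s$ and justifies $\partial_t\mathbf{X}\cdot\boldsymbol{\tau}=\partial_t s$ via the chain-rule identity $\partial_t\mathbf{X}\big|_\rho=\partial_t\mathbf{X}\big|_s+\boldsymbol{\tau}\,\partial_t s$ together with the claim that $\partial_t\mathbf{X}\big|_s$ is purely normal, which is exactly the reparametrization interpretation you articulate in your final paragraph.
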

	
	\begin{proof}
		Let \(Q := M_f \, \partial_\rho s\). Then, we have 
		\begin{align}\label{y1}
			\partial_s \left( \frac{1}{M_f} \, \partial_s \rho \right)=\partial_s\left(\frac{1}{Q}\right) = - \frac{\partial_s Q}{Q^2}.
		\end{align}
		From Lemma \ref{lem:dts}, we obtain 
		\begin{align}\label{y2}
			\partial_t s =\frac{1}{\partial_s \rho} \, \frac{P}{\mathcal{J}} \frac{\partial_s Q}{Q^2}=\frac{1}{\partial_s \rho} \, \frac{P}{\mathcal{J}} \frac{\partial_s Q}{\left(M_f \, \partial_\rho s\right)^2}= \frac{P}{\mathcal{J}} \, \frac{\partial_s Q}{M_f Q}.
		\end{align}
		Hence, by using 
		\begin{align*}
			\partial_s Q = \frac{M_f}{Q} \, \partial_\rho Q
		\end{align*}
		in \eqref{y2}, we have
		\begin{align}\label{lem:dtX_tau_pf1}
			\partial_t s =\frac{P}{\mathcal{J}} \frac{M_f}{Q} \cdot \frac{\partial_\rho Q}{M_f Q} =\frac{P}{\mathcal{J}} \frac{\partial_\rho Q}{Q^2}=\frac{P}{\mathcal{J}} \, \left( M_f \, \partial_\rho s \right)^{-2} \, \partial_\rho \left( M_f \, \partial_\rho s \right).
		\end{align}
		The curve evolution at fixed \(\rho\) is given by
		\[
		\partial_t \mathbf{X} \big|_{\rho} = \partial_t \mathbf{X} \big|_s + \boldsymbol{\tau} \, \partial_t s.
		\]
		Since \(\partial_t \mathbf{X} \big|_s\) contributes only to the normal motion, we obtain the tangential velocity:
		\begin{align}\label{lem:dtX_tau_pf2}
			\partial_t \mathbf{X} \cdot \boldsymbol{\tau} = \partial_t s.
		\end{align}
		By virtue of \eqref{lem:dtX_tau_pf1} and \eqref{lem:dtX_tau_pf2}, we get
		\[
		\partial_t \mathbf{X} \cdot \boldsymbol{\tau} = \frac{P}{\mathcal{J}} \, \left( M_f \, \partial_\rho s \right)^{-2} \, \partial_\rho \left( M_f \, \partial_\rho s \right)
		= \frac{P}{\mathcal{J}} \, \left( M_f \, \left|\partial_\rho \mathbf{X}\right| \right)^{-2} \, \partial_\rho \left( M_f \, \left|\partial_\rho \mathbf{X}\right| \right).
		\]
		Finally, since the normal component of the curve evolution is given by 
		\(\partial_t \mathbf{X} \cdot \mathbf{n} = V\) from \eqref{WM_a}, we conclude
		\begin{align*}
			\partial_t \mathbf{X} = \left(\partial_t \mathbf{X}\cdot \mathbf{n}\right)\mathbf{n} + \left(\partial_t \mathbf{X}\cdot \boldsymbol{\tau}\right)\boldsymbol{\tau} = V\mathbf{n} + \left[\frac{P}{\mathcal{J}} \, \left( M_f \, \left|\partial_\rho \mathbf{X}\right| \right)^{-2} \, \partial_\rho \left( M_f \, \left|\partial_\rho \mathbf{X}\right| \right)\right]\boldsymbol{\tau}.
		\end{align*}
		Therefore, we have completed the proof. 
	\end{proof}
	
	We couple the total velocity \eqref{tangential_velocity} with the original Willmore flow~\eqref{WM} to obtain the following adaptive moving mesh formulation.  
	Given the initial curve 
	$\mathbf{X}(\rho,0) = \mathbf{X}^0(\rho)$,
	we seek 
	$\left(\mathbf{X}(\rho,t),\, V(\rho,t),\, \kappa(\rho,t)\right), 
	(\rho,t)\in\mathbb{T}\times(0,+\infty)$,
	such that 
	\begin{subequations}\label{WM-adaptive}
		\begin{align}
			&\partial_t \mathbf{X} = V\mathbf{n} + \left[\frac{P}{\mathcal{J}} \, \left( M_f \, \left|\partial_\rho \mathbf{X}\right| \right)^{-2} \, \partial_\rho \left( M_f \, \left|\partial_\rho \mathbf{X}\right| \right)\right]\boldsymbol{\tau} , \label{WM-adaptive-a}\\[4pt] 
			&V = \frac{\partial_{\rho \rho} \kappa}{\left|\partial_{\rho}\mathbf{X}\right|^2} - \frac{\partial_{\rho} \kappa \,\partial_{\rho}\mathbf{X} \cdot \partial_{\rho \rho} \mathbf{X}}{\left|\partial_{\rho}\mathbf{X}\right|^4} + \frac{1}{2}\kappa^{3}, \\[4pt]
			&\kappa = -\frac{\partial_{\rho \rho} \mathbf{X}\cdot \mat{n}}{\left|\partial_{\rho}\mathbf{X}\right|^2}.
		\end{align}    
	\end{subequations}

	\begin{rem}
		Since the adaptive moving mesh Willmore system~\eqref{WM-adaptive} introduces an additional tangential velocity~\eqref{tangential_velocity}, 
		the third equation in~\eqref{WM-adaptive}, which defines the curvature $\kappa$, 
		needs to be reformulated accordingly. 
		This modification ensures consistency between the geometric definition of curvature and the new tangential motion, 
		allowing the system to maintain correct geometric evolution under the adaptive mesh redistribution. 
	\end{rem}

	\begin{rem}
		The tangential velocity equation has been extensively studied in the literature
		\cite{Huang2011Adaptive,HUANG20086532} and has recently been applied to the forced
		mean curvature flow \cite{Mackenzie2019}. In the present work, we extend this framework to the Willmore flow by employing a
		full velocity formulation. 
		In comparison with the forced mean curvature
		flow, the Willmore flow exhibits significantly more complex geometric structures
		and involves higher-order curvature terms, rendering its evolution substantially
		more intricate and challenging from both analytical and numerical perspectives.
		Furthermore, in order to more thoroughly investigate these additional
		complexities, we consider more sophisticated forms of the monitor function. In
		particular, beyond the curvature $\kappa$ itself, we also incorporate
		derivatives of curvature $\kappa_s$ and the squared curvature $\kappa^2$, into the monitor function to better capture localized geometric variations along the interface. Since different evolutionary stages may exhibit distinct geometric behaviors, a single monitor function may not always provide an optimal mesh distribution. Therefore, an adaptive monitor selection strategy is developed to automatically choose a suitable monitor function according to the current geometric characteristics of the curve. The detailed construction of the monitor functions and the adaptive selection procedure are presented in Appendix A.
		In addition, we compare our results with those obtained using BDF$k$-FDMs that do not incorporate the tangential velocity component. Through this comparison, we highlight the advantages of introducing a tangential velocity component, demonstrating its superior ability to preserve mesh quality, accurately capture flow evolution, and achieve higher numerical accuracy.
	\end{rem}
	
	\subsection{Finite difference discretizations}
	Prior to the spatial discretization, we slightly reformulate the curve evolution equation ~\eqref{WM-adaptive-a} to obtain a form that is more convenient for numerical implementation. Recalling that
	\[
	\partial_\rho \left|\partial_\rho \mathbf{X}\right| 
	= \frac{\partial_\rho \mathbf{X}\cdot \partial_{\rho\rho}\mathbf{X}}
	{\left|\partial_\rho \mathbf{X}\right|},
	\]
	we can expand~\eqref{WM-adaptive-a} and express it as
	\begin{equation}\label{eq:tangential_reform1}
		\partial_t \mathbf{X} = V\mathbf{n} + \left[\frac{P}{\mathcal{J}}
		\frac{\partial_{\rho} M_f\,\partial_\rho \mathbf{X}
			+ M_f\,\partial_{\rho\rho}\mathbf{X}}
		{\left(M_f\,\left|\partial_\rho \mathbf{X}\right|\right)^{2}}
		\cdot\boldsymbol{\tau}\right]\boldsymbol{\tau}.
	\end{equation}
	Using the definition of $\boldsymbol{\tau}$ given in \eqref{eq:normal}, the above relation can be reorganized into the equivalent form
	\begin{equation}\label{eq:tangential_reform2}
		\partial_t \mathbf{X} = V\mathbf{n} + \left[\left(\frac{P\,M_f}{\mathcal{J}\left(M_f\,\left|\partial_\rho \mathbf{X}\right|\right)^{2}}
		\,\partial_{\rho\rho}\mathbf{X}
		\right)\!\cdot\!\boldsymbol{\tau}
		+ \frac{P\,\partial_{\rho} M_f\,\left|\partial_\rho \mathbf{X}\right|}
		{\mathcal{J}\left(M_f\,\left|\partial_\rho \mathbf{X}\right|\right)^{2}}\right]\boldsymbol{\tau}.
	\end{equation}
	
	We next present the discrete formulations of the adaptive moving mesh Willmore system \eqref{WM-adaptive}. Following the same discretization strategy as in Section \ref{BDFk_FDM}, we employ second-order centered finite differences for spatial derivatives and the BDF$k$ for temporal discretization. 
	Let $\left\{\left(\mathbf{X}_i^{n-p}, V_i^{n-p}, \kappa_i^{n-p}\right)\right\}_{p=0}^{k-1}$ be the known solution values at previous time levels. 
	Then, we employ the following A-BDF$k$-FDMs to numerically solve the adaptive moving mesh Willmore system \eqref{WM-adaptive}:
	\begin{subequations}\label{eq:willmore_MM}
		\begin{align}
			&\frac{1}{\Delta t}\sum_{p=0}^{k}\alpha_p\,\mathbf{X}_i^{\,n+1-p} = V_i^{\,n+1} \mathbf{n}_i^{\,n+1} + \left[\left(\frac{P_i^{n+1}\,M_{f,i}^{n+1}}{\mathcal{J}\left(M_{f,i}^{n+1}\,\left|\delta_\rho \mathbf{X}_i^{n+1}\right|\right)^{2}}
			\,\delta_{\rho\rho}\mathbf{X}_i^{n+1}
			\right)\!\cdot\!\boldsymbol{\tau}_i^{n+1} + \frac{P_i^{n+1}\,\delta_{\rho} M_{f,i}^{n+1}\,\left|\delta_\rho \mathbf{X}_i^{n+1}\right|}
			{\mathcal{J}\left(M_{f,i}^{n+1}\,\left|\delta_\rho \mathbf{X}_i^{n+1}\right|\right)^{2}}\right]\boldsymbol{\tau}_i^{n+1}, \label{eq:willmore_MM_a}\\[4pt]
			&V_i^{\,n+1} = 
			\frac{\delta_{\rho\rho}\kappa_i^{\,n+1}}{\left|\delta_\rho \mathbf{X}_i^{\,n+1}\right|^2}
			- \frac{\delta_\rho \kappa_i^{\,n+1}\,\left(\delta_\rho \mathbf{X}_i^{\,n+1}\cdot \delta_{\rho\rho} \mathbf{X}_i^{\,n+1}\right)}
			{\left|\delta_\rho \mathbf{X}_i^{\,n+1}\right|^4}
			+ \tfrac{1}{2}\,(\kappa_i^{\,n+1})^3,\label{eq:willmore_MM_b}\\[4pt]
			&\kappa_i^{\,n+1} = 
			-\frac{\delta_{\rho\rho}\mathbf{X}_i^{\,n+1}\cdot\mathbf{n}_i^{\,n+1}}
			{\left|\delta_\rho \mathbf{X}_i^{\,n+1}\right|^2}.\label{eq:willmore_MM_c}
		\end{align}
	\end{subequations}
	Here, $\delta_\rho$ and $\delta_{\rho\rho}$ denote the second-order centered finite-difference operators for the first and second spatial derivatives, respectively, and  
	the coefficients $\alpha_p$ are the BDF$k$ weights that have been defined in Section~\ref{BDFk_FDM}. 
	
	To efficiently solve the A-BDF$k$-FDMs, we employ a Picard-type iterative
	procedure at every time step.  This approach updates the geometric variables
	through a sequence of linearized problems until convergence is achieved.
	Starting from the iteration
	$\left(\mathbf{X}_i^{\,n+1,m},\,V_i^{\,n+1,m},\,\kappa_i^{\,n+1,m}\right)$,
	the subsequent iteration
	$\left(\mathbf{X}_i^{\,n+1,m+1},\,V_i^{\,n+1,m+1},\,\kappa_i^{\,n+1,m+1}\right)$
	is determined by solving
	\begin{subequations}\label{eq:4.8}
		\begin{align}
			&\frac{\alpha_0 \mathbf{X}_i^{\,n+1,m+1}+\sum_{p=1}^k \alpha_p \mathbf{X}_i^{\,n+1-p}}{\Delta t} = \left[\left(\frac{P_i^{n+1,m}\,M_{f,i}^{n+1,m}}{\mathcal{J}\left(M_{f,i}^{n+1,m}\,\left|\delta_\rho \mathbf{X}_i^{n+1,m}\right|\right)^{2}}
			\,\delta_{\rho\rho}\mathbf{X}_i^{n+1,m+1} 
			\right)\!\cdot\!\boldsymbol{\tau}_i^{n+1,m}\right]\boldsymbol{\tau}_i^{n+1,m} \nonumber \\
			&\hspace{3cm} + \left(\frac{P_i^{n+1,m}\,\delta_{\rho} M_{f,i}^{n+1,m}\,\left|\delta_\rho \mathbf{X}_i^{n+1,m}\right|}
			{\mathcal{J}\left(M_{f,i}^{n+1,m}\,\left|\delta_\rho \mathbf{X}_i^{n+1,m}\right|\right)^{2}}\right)\boldsymbol{\tau}_i^{n+1,m} + V_i^{\,n+1,m+1} \mathbf{n}_i^{\,n+1,m}, \\[3pt]
			&V_i^{\,n+1,m+1} =
			\frac{\delta_{\rho\rho}\kappa_i^{\,n+1,m+1}}{\left|\delta_\rho \mathbf{X}_i^{\,n+1,m}\right|^2}
			-\frac{\delta_\rho \kappa_i^{\,n+1,m}\,
				\left(\delta_\rho \mathbf{X}_i^{\,n+1,m}\!\cdot\!\delta_{\rho\rho}\mathbf{X}_i^{\,n+1,m+1}\right)}
			{\left|\delta_\rho \mathbf{X}_i^{\,n+1,m}\right|^4}
			+\tfrac{1}{2}\left(\kappa_i^{\,n+1,m}\right)^2\kappa_i^{\,n+1,m+1}, \\[3pt]
			&\kappa_i^{\,n+1,m+1} =
			-\frac{\delta_{\rho\rho}\mathbf{X}_i^{\,n+1,m+1}\!\cdot\!\mathbf{n}_i^{\,n+1,m}}
			{\left|\delta_\rho \mathbf{X}_i^{\,n+1,m}\right|^2}.
		\end{align}
	\end{subequations}
	The iteration continues until the following stopping criterion is satisfied:
	\[
	\max_i\!\left(
	\left\|\mathbf{X}_i^{\,n+1,m+1}-\mathbf{X}_i^{\,n+1,m}\right\|
	+\left|V_i^{\,n+1,m+1}-V_i^{\,n+1,m}\right|
	+\left|\kappa_i^{\,n+1,m+1}-\kappa_i^{\,n+1,m}\right|\right)
	<\varepsilon_{\mathrm{tol}}.
	\]
	Once this tolerance criterion is satisfied, we set 
	\[
	\left(\mathbf{X}_i^{\,n+1},\,V_i^{\,n+1},\,\kappa_i^{\,n+1}\right)
	=\left(\mathbf{X}_i^{\,n+1,m+1},\,V_i^{\,n+1,m+1},\,\kappa_i^{\,n+1,m+1}\right).
	\]
	
		\begin{rem}
		All adaptive numerical schemes considered in this paper are based on fully implicit formulations. 
		Since iterative solvers are required, the computational cost is higher than that of linearized methods; 
		however, in practical computations the number of iterations typically remains at a relatively low level. 
		Moreover, due to the use of finite difference discretizations in space, the computational cost of each iteration is relatively small, 
		and consequently the overall computational complexity is not significantly increased. 
%
%
	\end{rem}
	
	\begin{rem}
		Although the proposed A-BDF$k$-FDMs effectively incorporate mesh adaptation into the Willmore flow evolution and provide improved mesh distribution properties for complex geometric evolutions, establishing a rigorous energy stability result for the resulting fully discrete schemes remains challenging. Therefore, it is desirable to further develop an energy-stable formulation that preserves the advantages of the adaptive framework.
		In the next section, we establish energy-stable adaptive moving mesh methods through a type of new relaxed Lagrange multiplier approaches. The resulting A-RLM-BDF$k$-FDMs are constructed within the adaptive moving mesh framework, and the methods are shown to satisfy a discrete energy stability law while retaining the adaptive mesh redistribution properties.
	\end{rem}

	\section{A-RLM-BDF$k$-FDMs}\label{sec4}
	In this section, we develop energy-stable A-RLM-BDF$k$-FDMs ($k=1, 2$) by incorporating a relaxed Lagrange multiplier approach into the proposed A-BDF$k$-FDMs.

	\subsection{Relaxed formulation}
	The Willmore flow can be interpreted as the gradient flow of the Willmore
	energy \eqref{willmore_W}. Obviously, the Willmore flow \eqref{WM} holds the following energy decay law \cite{Barrett2020Parametric,Pan2026EnergyStable}
	\begin{equation}
		\frac{dW}{dt}
		=
		-\int_{\Gamma}
		\left(\partial_{ss}\kappa+\frac12\kappa^3\right)\partial_t \mathbf{X} \cdot \mathbf{n}\,ds
		=
		-\int_{\Gamma} V^2\,ds
		\leq 0.
	\end{equation}
	Based on this gradient flow structure, we introduce an auxiliary multiplier $q(t)$ and reformulate the original Willmore flow system~\eqref{WM} into the following equivalent formulation:
	\begin{subequations}\label{WM-adaptive-LM}
		\begin{align}
			&\partial_t \mathbf{X} = V\mathbf{n} + \left[\frac{P}{\mathcal{J}} \, \left( M_f \, \left|\partial_\rho \mathbf{X}\right| \right)^{-2} \, \partial_\rho \left( M_f \, \left|\partial_\rho \mathbf{X}\right| \right)\right]\boldsymbol{\tau} , \label{WM-adaptive-a}\\[4pt] 
			&V = q(t)\left(\frac{\partial_{\rho \rho} \kappa}{\left|\partial_{\rho}\mathbf{X}\right|^2} - \frac{\partial_{\rho} \kappa \,\partial_{\rho}\mathbf{X} \cdot \partial_{\rho \rho} \mathbf{X}}{\left|\partial_{\rho}\mathbf{X}\right|^4} + \frac{1}{2}\kappa^{3}\right), \\[4pt]
			&\kappa = -\frac{\partial_{\rho \rho} \mathbf{X}\cdot \mat{n}}{\left|\partial_{\rho}\mathbf{X}\right|^2}, \\[4pt]
			& \frac{dW}{dt} = -q(t)\int_{0}^{1}
			\left(\frac{\partial_{\rho \rho} \kappa}{\left|\partial_{\rho}\mathbf{X}\right|^2} - \frac{\partial_{\rho} \kappa \,\partial_{\rho}\mathbf{X} \cdot \partial_{\rho \rho} \mathbf{X}}{\left|\partial_{\rho}\mathbf{X}\right|^4} + \frac{1}{2}\kappa^{3}\right)\partial_t \mathbf{X} \cdot \mathbf{n}\left|\partial_\rho \mathbf{X}\right|\,d\rho.
		\end{align}    
	\end{subequations}
	
	By prescribing the initial condition $q(0)=1$, the system
	\eqref{WM-adaptive-LM} is equivalent to the original adaptive Willmore flow
	system \eqref{WM-adaptive}, since $q(t)\equiv1$ for all $t\ge0$.
	Consequently, the relaxed Lagrange multiplier formulation
	\eqref{WM-adaptive-LM} preserves the original energy decay law of the
	adaptive Willmore flow.
	Indeed, substituting the first two equations of
	\eqref{WM-adaptive-LM} into the energy evolution equation yields
	\begin{equation}
		\begin{aligned}
			\frac{dW}{dt}
			&=
			-q(t)\int_{\Gamma}
			\left(
			\frac{\partial_{\rho \rho} \kappa}{|\partial_{\rho}\mathbf{X}|^2}
			-\frac{\partial_{\rho}\kappa\,\partial_{\rho}\mathbf{X}
				\cdot\partial_{\rho\rho}\mathbf{X}}
			{|\partial_{\rho}\mathbf{X}|^4}
			+\frac12\kappa^3
			\right)
			\partial_t \mathbf{X}\cdot\mathbf{n}\,ds=
			-\int_{\Gamma}V^2\,ds
			\le0.
		\end{aligned}
	\end{equation}
	Therefore, the introduced Lagrange multiplier $q(t)$ preserves the original
	energy decay structure of the adaptive Willmore flow.

	However, despite preserving the original energy dissipation law at the
	continuous level, the corresponding FDMs based on the Lagrange multiplier
	formulation~\eqref{WM-adaptive-LM} often give rise to singular algebraic
	systems in our numerical experiments.
	To overcome this difficulty, motivated by~\cite{Jing2026RLM,Zhang2026RLMPF}, we introduce an auxiliary relaxation equation to replace the original constraint with a relaxed counterpart. Specifically, the constraint imposed on the Lagrange multiplier is relaxed, allowing the multiplier to evolve dynamically during the geometric evolution. As a result, the multiplier is no longer required to satisfy the original constraint exactly, while the relaxed formulation becomes consistent with the original constraint when the relaxation residual approaches zero.

	The resulting
	relaxed formulation for $q(t)$ is given as follows:
	\begin{align}
		\begin{cases}
			\frac{dq(t)}{dt} = -\beta \left(\frac{dW}{dt} + q(t)\int_{0}^{1}
			\left(\frac{\partial_{\rho \rho} \kappa}{\left|\partial_{\rho}\mathbf{X}\right|^2} - \frac{\partial_{\rho} \kappa \,\partial_{\rho}\mathbf{X} \cdot \partial_{\rho \rho} \mathbf{X}}{\left|\partial_{\rho}\mathbf{X}\right|^4} + \frac{1}{2}\kappa^{3}\right)\partial_t \mathbf{X} \cdot \mathbf{n}\left|\partial_\rho \mathbf{X}\right|\,d\rho\right),\\
			q(0) = 1.
		\end{cases}
	\end{align}
	where $\beta>0$ denotes a relaxation parameter controlling the deviation from the original constraint. A smaller value of $\beta$ leads to a slower response of the multiplier
	$q(t)$ during the evolution. Consequently, the system \eqref{WM-adaptive-LM} admits
	the following equivalent reformulation:
	\begin{subequations}\label{WM-adaptive-RLM}
		\begin{align}
			&\partial_t \mathbf{X} = V\mathbf{n} + \left[\frac{P}{\mathcal{J}} \, \left( M_f \, \left|\partial_\rho \mathbf{X}\right| \right)^{-2} \, \partial_\rho \left( M_f \, \left|\partial_\rho \mathbf{X}\right| \right)\right]\boldsymbol{\tau} , \label{WM-adaptive-a}\\[4pt] 
			&V = q(t)\left(\frac{\partial_{\rho \rho} \kappa}{\left|\partial_{\rho}\mathbf{X}\right|^2} - \frac{\partial_{\rho} \kappa \,\partial_{\rho}\mathbf{X} \cdot \partial_{\rho \rho} \mathbf{X}}{\left|\partial_{\rho}\mathbf{X}\right|^4} + \frac{1}{2}\kappa^{3}\right), \label{WM-adaptive-b}\\[4pt]
			&\kappa = -\frac{\partial_{\rho \rho} \mathbf{X}\cdot \mat{n}}{\left|\partial_{\rho}\mathbf{X}\right|^2}, \\[4pt]
			& \frac{dq(t)}{dt} = -\beta \left(\frac{dW}{dt} + q(t)\int_{0}^{1}
			\left(\frac{\partial_{\rho \rho} \kappa}{\left|\partial_{\rho}\mathbf{X}\right|^2} - \frac{\partial_{\rho} \kappa \,\partial_{\rho}\mathbf{X} \cdot \partial_{\rho \rho} \mathbf{X}}{\left|\partial_{\rho}\mathbf{X}\right|^4} + \frac{1}{2}\kappa^{3}\right)\partial_t \mathbf{X} \cdot \mathbf{n}\left|\partial_\rho \mathbf{X}\right|\,d\rho\right)\label{WM-adaptive-d}.
		\end{align}    
	\end{subequations}

	\begin{thm}
		The reformulated system \eqref{WM-adaptive-RLM} obeys the following energy decay law: 
		\begin{align}
			\frac{dW_{RLM}}{dt} = -\int_0^1 V^2\left| \partial_{\rho} \mathbf{X}\right| d\rho\leq 0,
		\end{align}
		where 
		\begin{align} \label{W_RLM}
			W_{RLM}: = W+ \frac{q(t)-1}{\beta} =\frac{1}{2}\int_{\Gamma} \kappa ^2ds + \frac{q(t)-1}{\beta}.
		\end{align}
	\end{thm}
	\begin{proof}
		The variation of the Willmore energy gives
		\[
		\frac{dW}{dt}
		=
		-\int_0^1
		\left(
		\frac{\partial_{\rho \rho} \kappa}{|\partial_{\rho}\mathbf{X}|^2}
		-\frac{\partial_{\rho} \kappa \,\partial_{\rho}\mathbf{X}
			\cdot\partial_{\rho\rho}\mathbf X}{|\partial_{\rho}\mathbf X|^4}
		+\frac12\kappa^3
		\right)
		\partial_t\mathbf X\cdot\mathbf n
		|\partial_\rho\mathbf X|\,d\rho .
		\]
		Using the relaxation equation \eqref{WM-adaptive-d}, we obtain
		\[
		\begin{aligned}
			\frac{dW_{RLM}}{dt}
			=
			\frac{dW}{dt}+\frac1\beta\frac{dq}{dt}
			=
			-q(t)\int_0^1
			\left(
			\frac{\partial_{\rho \rho} \kappa}{|\partial_{\rho}\mathbf{X}|^2}
			-\frac{\partial_{\rho} \kappa \,\partial_{\rho}\mathbf{X}
				\cdot\partial_{\rho\rho}\mathbf X}{|\partial_{\rho}\mathbf X|^4}
			+\frac12\kappa^3
			\right)
			\partial_t\mathbf X\cdot\mathbf n
			|\partial_\rho\mathbf X|\,d\rho .
		\end{aligned}
		\]
		According to \eqref{WM-adaptive-b} together with
		$\partial_t\mathbf X\cdot\mathbf n=V,$
		we derive
		\[
		\frac{dW_{RLM}}{dt}
		=
		-\int_0^1
		V^2|\partial_\rho\mathbf X|\,d\rho
		\leq0,
		\]
		which ends the proof. 
	\end{proof}
	\begin{rem}
		At the continuous level, the relaxation variable satisfies
		$q(t)\equiv1$, and hence the additional term
		$(q(t)-1)/\alpha$ vanishes, recovering the original Willmore energy.
		At the discrete level, the relaxation variable $q^n$ may deviate from
		unity due to the temporal discretization. The asymptotic consistency
		and convergence of the relaxation variable have been established in
		the framework of relaxed Lagrange multiplier approaches; see
		\cite{Zhang2026RLMPF}. Therefore, the relaxed energy serves as a discrete energy
		functional to guarantee the energy stability of the fully discrete
		scheme.
	\end{rem}
\subsection{A-RLM-BDF1-FDM}

We first derive the first-order fully discrete A-RLM-BDF1-FDM. Taking $k=1$, i.e., $\alpha_0=1$ and $\alpha_1=-1$, in \eqref{eq:willmore_MM}, together with a backward Euler discretization of \eqref{WM-adaptive-d}, yields the following A-RLM-BDF1-FDM:
\begin{subequations}\label{eq:willmore_BDF1}
	\begin{align}
		&\frac{\mathbf{X}_i^{n+1} - \mathbf{X}_i^n}{\Delta t} = V_i^{n+1} \mathbf{n}_i^{n+1} + \left[\left(\frac{P_i^{n+1} M_{f,i}^{n+1}}{\mathcal{J}\left(M_{f,i}^{n+1} \left|\delta_\rho \mathbf{X}_i^{n+1}\right|\right)^2} \delta_{\rho\rho}\mathbf{X}_i^{n+1}\right) \cdot \boldsymbol{\tau}_i^{n+1} + \frac{P_i^{n+1} \delta_\rho M_{f,i}^{n+1} \left|\delta_\rho \mathbf{X}_i^{n+1}\right|}{\mathcal{J}\left(M_{f,i}^{n+1} \left|\delta_\rho \mathbf{X}_i^{n+1}\right|\right)^2}\right] \boldsymbol{\tau}_i^{n+1}, \label{eq:BDF1_a} \\[4pt]
		&V_i^{n+1} = q^{n+1} \left( \frac{\delta_{\rho\rho}\kappa_i^{n+1}}{\left|\delta_\rho \mathbf{X}_i^{n+1}\right|^2} - \frac{\delta_\rho \kappa_i^{n+1} \left(\delta_\rho \mathbf{X}_i^{n+1} \cdot \delta_{\rho\rho} \mathbf{X}_i^{n+1}\right)}{\left|\delta_\rho \mathbf{X}_i^{n+1}\right|^4} + \frac{1}{2}(\kappa_i^{n+1})^3 \right), \label{eq:BDF1_b}\\[4pt]
		&\kappa_i^{n+1} = -\frac{\delta_{\rho\rho}\mathbf{X}_i^{n+1} \cdot \mathbf{n}_i^{n+1}}{\left|\delta_\rho \mathbf{X}_i^{n+1}\right|^2}, \label{eq:BDF1_c}\\[4pt]
		&\frac{q^{n+1} - q^n}{\Delta t} = -\beta \frac{W^{n+1} - W^n}{\Delta t} \nonumber\\
		&\quad - \beta q^{n+1} \sum_i \left( \frac{\delta_{\rho\rho}\kappa_i^{n+1}}{\left|\delta_\rho \mathbf{X}_i^{n+1}\right|^2} - \frac{\delta_\rho \kappa_i^{n+1} \left(\delta_\rho \mathbf{X}_i^{n+1} \cdot \delta_{\rho\rho} \mathbf{X}_i^{n+1}\right)}{\left|\delta_\rho \mathbf{X}_i^{n+1}\right|^4} + \frac{1}{2}(\kappa_i^{n+1})^3 \right) \left(\frac{\mathbf{X}_i^{n+1} - \mathbf{X}_i^n}{\Delta t} \cdot \mathbf{n}_i^{n+1}\right) \left|\delta_\rho \mathbf{X}_i^{n+1}\right| \Delta\rho. \label{eq:BDF1_d}
	\end{align}
	\end{subequations}
		
For the subsequent energy analysis, we introduce the discrete Willmore energy
\begin{align}
	W^n := \frac{1}{2}\sum_i (\kappa_i^n)^2
	\left|\delta_\rho \mathbf{X}_i^n\right| \Delta\rho,
	\label{eq:discrete_willmore_energy}
\end{align}
and define the relaxed energy for the A-RLM-BDF1-FDM by
\begin{align}
	W_{\mathrm{RLM}}^{n} := W^n + \frac{q^n - 1}{\beta}.
	\label{eq:discrete_RLM_energy}
\end{align}
Then, we have the following energy stability result. 
	
	\begin{thm}[Discrete Energy Stability Law for A-RLM-BDF1-FDM]
The  A-RLM-BDF1-FDM \eqref{eq:willmore_BDF1} satisfies the discrete energy stability law:
		\begin{align}
			\frac{W_{\mathrm{RLM}}^{n+1} - W_{\mathrm{RLM}}^n}{\Delta t} = -\sum_i (V_i^{n+1})^2 \left|\delta_\rho \mathbf{X}_i^{n+1}\right| \Delta\rho \le 0, \label{eq:BDF1_dissipation}
		\end{align}
		which guarantees the unconditional discrete energy stability.
	\end{thm}
		
		\begin{proof}
			Taking the inner product of \eqref{eq:BDF1_a} with the unit normal vector $\mathbf{n}_i^{n+1}$ and utilizing the orthogonality property $\boldsymbol{\tau}_i^{n+1} \cdot \mathbf{n}_i^{n+1} = 0$, we extract the discrete normal velocity identity:
			\begin{align}
				\frac{\mathbf{X}_i^{n+1} - \mathbf{X}_i^n}{\Delta t} \cdot \mathbf{n}_i^{n+1} = V_i^{n+1}. \label{eq:BDF1_normal_vel}
			\end{align}
			Substituting \eqref{eq:BDF1_normal_vel} into \eqref{eq:BDF1_d} and recognizing from \eqref{eq:BDF1_b},
			the discrete relaxation equation reduces to
			\begin{align}
				\frac{q^{n+1} - q^n}{\Delta t} = -\beta \frac{W^{n+1} - W^n}{\Delta t} - \beta \sum_i (V_i^{n+1})^2 \left|\delta_\rho \mathbf{X}_i^{n+1}\right| \Delta\rho.
			\end{align}
			Dividing both sides by $\beta > 0$, rearranging the terms, and applying the definition of the relaxed discrete energy in \eqref{eq:discrete_RLM_energy}, we obtain
			\begin{align}
				\frac{W_{\mathrm{RLM}}^{n+1} - W_{\mathrm{RLM}}^n}{\Delta t} = \frac{W^{n+1} - W^n}{\Delta t} + \frac{1}{\beta}\frac{q^{n+1} - q^n}{\Delta t} = -\sum_i (V_i^{n+1})^2 \left|\delta_\rho \mathbf{X}_i^{n+1}\right| \Delta\rho,
			\end{align}
			which directly yields the discrete energy stability law.
			\end{proof}
To efficiently solve the A-RLM-BDF1-FDM, we employ a Picard-type iterative procedure at every time step. This approach updates the geometric variables through a sequence of linearized problems until convergence is achieved. Starting from the iteration $\left(\mathbf{X}_i^{\,n+1,m},\,V_i^{\,n+1,m},\,\kappa_i^{\,n+1,m}, q^{\,n+1,m}\right)$, the subsequent iteration $\left(\mathbf{X}_i^{\,n+1,m+1},\,V_i^{\,n+1,m+1},\,\kappa_i^{\,n+1,m+1}, q^{\,n+1,m+1}\right)$ is determined by solving
\begin{subequations}\label{eq:BDF1_iterative}
	\begin{align}
		&\frac{\mathbf{X}_i^{\,n+1,m+1} - \mathbf{X}_i^n}{\Delta t} = \left[\left(\frac{P_i^{n+1,m}\,M_{f,i}^{n+1,m}}{\mathcal{J}\left(M_{f,i}^{n+1,m}\,\left|\delta_\rho \mathbf{X}_i^{n+1,m}\right|\right)^{2}}
		\,\delta_{\rho\rho}\mathbf{X}_i^{n+1,m+1} 
		\right)\!\cdot\!\boldsymbol{\tau}_i^{n+1,m}\right]\boldsymbol{\tau}_i^{n+1,m} \nonumber \\
		&\hspace{3.2cm} + \left(\frac{P_i^{n+1,m}\,\delta_{\rho} M_{f,i}^{n+1,m}\,\left|\delta_\rho \mathbf{X}_i^{n+1,m}\right|}
		{\mathcal{J}\left(M_{f,i}^{n+1,m}\,\left|\delta_\rho \mathbf{X}_i^{n+1,m}\right|\right)^{2}}\right)\boldsymbol{\tau}_i^{n+1,m} + V_i^{\,n+1,m+1} \mathbf{n}_i^{\,n+1,m}, \label{eq:BDF1_iter_a} \\[4pt]
		&V_i^{\,n+1,m+1} = q^{\,n+1,m} \left(
		\frac{\delta_{\rho\rho}\kappa_i^{\,n+1,m+1}}{\left|\delta_\rho \mathbf{X}_i^{\,n+1,m}\right|^2}
		-\frac{\delta_\rho \kappa_i^{\,n+1,m}\,
			\left(\delta_\rho \mathbf{X}_i^{\,n+1,m}\!\cdot\!\delta_{\rho\rho}\mathbf{X}_i^{\,n+1,m+1}\right)}
		{\left|\delta_\rho \mathbf{X}_i^{\,n+1,m}\right|^4}
		+\tfrac{1}{2}\left(\kappa_i^{\,n+1,m}\right)^2\kappa_i^{\,n+1,m+1} \right), \label{eq:BDF1_iter_b} \\[4pt]
		&\kappa_i^{\,n+1,m+1} =
		-\frac{\delta_{\rho\rho}\mathbf{X}_i^{\,n+1,m+1}\!\cdot\!\mathbf{n}_i^{\,n+1,m}}
		{\left|\delta_\rho \mathbf{X}_i^{\,n+1,m}\right|^2}, \label{eq:BDF1_iter_c} \\[4pt]
		& \frac{q^{\,n+1,m+1} - q^n}{\Delta t} =-\beta \frac{W^{\,n+1,m+1} - W^n}{\Delta t} -\beta q^{\,n+1,m+1}
		\sum_i \left( \frac{\delta_{\rho\rho}\kappa_i^{\,n+1,m}}
		{\left|\delta_\rho\mathbf X_i^{\,n+1,m}\right|^2}
		- \frac{ \delta_\rho\kappa_i^{\,n+1,m}
			\left( \delta_\rho\mathbf X_i^{\,n+1,m}
			\cdot \delta_{\rho\rho}\mathbf X_i^{\,n+1,m}
			\right)}{\left|\delta_\rho\mathbf X_i^{\,n+1,m}\right|^4}\right. \nonumber\\
		&\quad
	\left.	+ \frac12(\kappa_i^{\,n+1,m})^3\right) \frac{\mathbf{X}_i^{\,n+1,m} - \mathbf{X}_i^n}{\Delta t} \cdot \mathbf{n}^{\,n+1,m}_i\left|\delta_\rho\mathbf X_i^{\,n+1,m}\right| \Delta\rho. \label{eq:BDF1_iter_d}
	\end{align}
\end{subequations}
The iteration continues until the following stopping criterion is satisfied:
\[
\max_i\!\left(
\left\|\mathbf{X}_i^{\,n+1,m+1}-\mathbf{X}_i^{\,n+1,m}\right\|
+\left|V_i^{\,n+1,m+1}-V_i^{\,n+1,m}\right|
+\left|\kappa_i^{\,n+1,m+1}-\kappa_i^{\,n+1,m}\right| +\left|q^{\,n+1,m+1}-q^{\,n+1,m}\right|\right)
<\varepsilon_{\mathrm{tol}}.
\]
Once this tolerance criterion is satisfied, we set 
\[
\left(\mathbf{X}_i^{\,n+1},\,V_i^{\,n+1},\,\kappa_i^{\,n+1},q^{\,n+1}\right)
=\left(\mathbf{X}_i^{\,n+1,m+1},\,V_i^{\,n+1,m+1},\,\kappa_i^{\,n+1,m+1},\,q^{\,n+1,m+1}\right).
\]		
		
\subsection{A-RLM-BDF2-FDM}

To achieve second-order temporal accuracy, we now consider the A-RLM-BDF2-FDM. Specifically, taking $k=2$ with $\alpha_0=\frac32$, $\alpha_1=-2$, and $\alpha_2=\frac12$ in \eqref{eq:willmore_MM}, together with a BDF2 discretization of \eqref{WM-adaptive-d}, we obtain the following A-RLM-BDF2-FDM:
\begin{subequations}\label{eq:willmore_BDF2}
	\begin{align}
		&\frac{3\mathbf{X}_i^{n+1} - 4\mathbf{X}_i^n + \mathbf{X}_i^{n-1}}{2\Delta t} = V_i^{n+1} \mathbf{n}_i^{n+1} + \left[\left(\frac{P_i^{n+1} M_{f,i}^{n+1}}{\mathcal{J}\left(M_{f,i}^{n+1} \left|\delta_\rho \mathbf{X}_i^{n+1}\right|\right)^2} \delta_{\rho\rho}\mathbf{X}_i^{n+1}\right) \cdot \boldsymbol{\tau}_i^{n+1} + \frac{P_i^{n+1} \delta_\rho M_{f,i}^{n+1} \left|\delta_\rho \mathbf{X}_i^{n+1}\right|}{\mathcal{J}\left(M_{f,i}^{n+1} \left|\delta_\rho \mathbf{X}_i^{n+1}\right|\right)^2}\right] \boldsymbol{\tau}_i^{n+1}, \label{eq:BDF2_a} \\[4pt]
		&V_i^{n+1} = q^{n+1} \left( \frac{\delta_{\rho\rho}\kappa_i^{n+1}}{\left|\delta_\rho \mathbf{X}_i^{n+1}\right|^2} - \frac{\delta_\rho \kappa_i^{n+1} \left(\delta_\rho \mathbf{X}_i^{n+1} \cdot \delta_{\rho\rho} \mathbf{X}_i^{n+1}\right)}{\left|\delta_\rho \mathbf{X}_i^{n+1}\right|^4} + \frac{1}{2}(\kappa_i^{n+1})^3 \right), \label{eq:BDF2_b}\\[4pt]
		&\kappa_i^{n+1} = -\frac{\delta_{\rho\rho}\mathbf{X}_i^{n+1} \cdot \mathbf{n}_i^{n+1}}{\left|\delta_\rho \mathbf{X}_i^{n+1}\right|^2}, \label{eq:BDF2_c}\\[4pt]
		&\frac{3q^{n+1} - 4q^n + q^{n-1}}{2\Delta t} = -\beta \frac{3W^{n+1} - 4W^n + W^{n-1}}{2\Delta t} \nonumber\\
		&\quad - \beta q^{n+1} \sum_i \left( \frac{\delta_{\rho\rho}\kappa_i^{n+1}}{\left|\delta_\rho \mathbf{X}_i^{n+1}\right|^2} - \frac{\delta_\rho \kappa_i^{n+1} \left(\delta_\rho \mathbf{X}_i^{n+1} \cdot \delta_{\rho\rho} \mathbf{X}_i^{n+1}\right)}{\left|\delta_\rho \mathbf{X}_i^{n+1}\right|^4} + \frac{1}{2}(\kappa_i^{n+1})^3 \right) \left(\frac{3\mathbf{X}_i^{n+1} - 4\mathbf{X}_i^n + \mathbf{X}_i^{n-1}}{2\Delta t} \cdot \mathbf{n}_i^{n+1}\right) \left|\delta_\rho \mathbf{X}_i^{n+1}\right| \Delta\rho. \label{eq:BDF2_d}
	\end{align}
\end{subequations}

For the A-RLM-BDF2-FDM, we continue to use the notation $W_{\rm RLM}^{n}$ for the relaxed energy, although its definition is modified as follows:
\begin{equation}\label{eq:RLM_energy_BDF2}
	W_{\rm RLM}^{n}
	:=
	\frac{3W^{n}-W^{n-1}}{2}+\frac{3q^{n}-q^{n-1}-2}{2\beta}.
\end{equation}

\begin{thm}[Discrete Energy Stability Law for BDF2]
The A-RLM-BDF2-FDM satisfies the discrete energy stability law:
	\begin{equation}\label{eq:RLM_energy_decay}
		\begin{aligned}
			\frac{W_{\rm RLM}^{n+1}-W_{\rm RLM}^{n}}{\Delta t}
			=-\sum_i(V_i^{n+1})^2|\delta_\rho\mathbf X_i^{n+1}|\Delta\rho
			\leq0,
		\end{aligned}
	\end{equation}
which implies that the A-RLM-BDF2-FDM is unconditionally energy-stable.
\end{thm}
\begin{proof}
	Taking the normal component of \eqref{eq:BDF2_a} and using the orthogonality between the tangential and normal directions, we obtain
	\begin{align*}
		\frac{3\mathbf X_i^{n+1}-4\mathbf X_i^n+\mathbf X_i^{n-1}}{2\Delta t}
		\cdot\mathbf n_i^{n+1}
		=
		V_i^{n+1}.
	\end{align*}
	Multiplying \eqref{eq:BDF2_d} by $1/\beta$ and combining it with \eqref{eq:BDF2_b}, we derive
	\begin{equation*}
		\frac{3W^{n+1}-4W^n+W^{n-1}}{2\Delta t}
		+
		\frac{3q^{n+1}-4q^n+q^{n-1}}{2\beta\Delta t}
		=
		-\sum_i(V_i^{n+1})^2|\delta_\rho\mathbf X_i^{n+1}|\Delta\rho,
	\end{equation*}
	which implies that 
	\begin{align*}
		\frac{W_{\rm RLM}^{n+1}-W_{\rm RLM}^{n}}{\Delta t}
		=\left(\frac{3W^{n+1}-W^{n}}{2\Delta t}+\frac{3q^{n+1}-q^{n}-2}{2\beta\Delta t}\right)-
		\left(\frac{3W^{n}-W^{n-1}}{2\Delta t}+\frac{3q^{n}-q^{n-1}-2}{2\beta\Delta t}\right)
		=
		-\sum_i(V_i^{n+1})^2|\delta_\rho\mathbf X_i^{n+1}|\Delta\rho\leq 0.
	\end{align*}
	Therefore, the A-RLM-BDF2-FDM satisfies the discrete energy stability law.
\end{proof}

To solve the nonlinear A-RLM-BDF2-FDM system efficiently, we apply the Picard-type iterative procedure at time step $n+1$. Starting from $\left(\mathbf{X}_i^{\,n+1,m},\,V_i^{\,n+1,m},\,\kappa_i^{\,n+1,m}, q^{\,n+1,m}\right)$, the next iterate $\left(\mathbf{X}_i^{\,n+1,m+1},\,V_i^{\,n+1,m+1},\,\kappa_i^{\,n+1,m+1}, q^{\,n+1,m+1}\right)$ is obtained by solving
\begin{subequations}\label{eq:BDF2_iterative}
	\begin{align}
		&\frac{3\mathbf{X}_i^{\,n+1,m+1} - 4\mathbf{X}_i^n + \mathbf{X}_i^{n-1}}{2\Delta t} = \left[\left(\frac{P_i^{n+1,m}\,M_{f,i}^{n+1,m}}{\mathcal{J}\left(M_{f,i}^{n+1,m}\,\left|\delta_\rho \mathbf{X}_i^{n+1,m}\right|\right)^{2}}
		\,\delta_{\rho\rho}\mathbf{X}_i^{n+1,m+1} 
		\right)\!\cdot\!\boldsymbol{\tau}_i^{n+1,m}\right]\boldsymbol{\tau}_i^{n+1,m} \nonumber \\
		&\hspace{4.2cm} + \left(\frac{P_i^{n+1,m}\,\delta_{\rho} M_{f,i}^{n+1,m}\,\left|\delta_\rho \mathbf{X}_i^{n+1,m}\right|}
		{\mathcal{J}\left(M_{f,i}^{n+1,m}\,\left|\delta_\rho \mathbf{X}_i^{n+1,m}\right|\right)^{2}}\right)\boldsymbol{\tau}_i^{n+1,m} + V_i^{\,n+1,m+1} \mathbf{n}_i^{\,n+1,m}, \label{eq:BDF2_iter_a} \\[4pt]
		&V_i^{\,n+1,m+1} = q^{\,n+1,m} \left(
		\frac{\delta_{\rho\rho}\kappa_i^{\,n+1,m+1}}{\left|\delta_\rho \mathbf{X}_i^{\,n+1,m}\right|^2}
		-\frac{\delta_\rho \kappa_i^{\,n+1,m}\,
			\left(\delta_\rho \mathbf{X}_i^{\,n+1,m}\!\cdot\!\delta_{\rho\rho}\mathbf{X}_i^{\,n+1,m+1}\right)}
		{\left|\delta_\rho \mathbf{X}_i^{\,n+1,m}\right|^4}
		+\tfrac{1}{2}\left(\kappa_i^{\,n+1,m}\right)^2\kappa_i^{\,n+1,m+1} \right), \label{eq:BDF2_iter_b} \\[4pt]
		&\kappa_i^{\,n+1,m+1} =
		-\frac{\delta_{\rho\rho}\mathbf{X}_i^{\,n+1,m+1}\!\cdot\!\mathbf{n}_i^{\,n+1,m}}
		{\left|\delta_\rho \mathbf{X}_i^{\,n+1,m}\right|^2}, \label{eq:BDF2_iter_c} \\[4pt]
		& \frac{3q^{\,n+1,m+1} - 4q^n + q^{n-1}}{2\Delta t} =-\beta \frac{3W^{\,n+1,m+1} - 4W^n + W^{n-1}}{2\Delta t} -\beta q^{\,n+1,m+1}
		\sum_i\left( \frac{\delta_{\rho\rho}\kappa_i^{\,n+1,m}}
		{\left|\delta_\rho\mathbf X_i^{\,n+1,m}\right|^2}- \frac{ \delta_\rho\kappa_i^{\,n+1,m}
			\left( \delta_\rho\mathbf X_i^{\,n+1,m}
			\cdot \delta_{\rho\rho}\mathbf X_i^{\,n+1,m}
			\right)}{\left|\delta_\rho\mathbf X_i^{\,n+1,m}\right|^4}\right.\nonumber\\
		&\quad  
		\left.
		+ \frac12(\kappa_i^{\,n+1,m})^3\right)
		 \left(\frac{3\mathbf{X}_i^{\,n+1,m} - 4\mathbf{X}_i^n + \mathbf{X}_i^{n-1}}{2\Delta t} \cdot \mathbf{n}^{\,n+1,m}_i\right)\left|\delta_\rho\mathbf X_i^{\,n+1,m}\right| \Delta\rho. \label{eq:BDF2_iter_d}
	\end{align}
\end{subequations}
The iteration terminates when the error falls below $\varepsilon_{\mathrm{tol}}$:
\[
\max_i\!\left(
\left\|\mathbf{X}_i^{\,n+1,m+1}-\mathbf{X}_i^{\,n+1,m}\right\|
+\left|V_i^{\,n+1,m+1}-V_i^{\,n+1,m}\right|
+\left|\kappa_i^{\,n+1,m+1}-\kappa_i^{\,n+1,m}\right| +\left|q^{\,n+1,m+1}-q^{\,n+1,m}\right|\right)
<\varepsilon_{\mathrm{tol}}.
\]

	\begin{rem}
		Besides incorporating the mesh adaptation mechanism directly into the
		geometric evolution through an appropriate tangential velocity, another
		possible strategy is to perform mesh redistribution or reparametrization
		after each time step. Although such post-processing mesh adjustment
		techniques have been widely adopted in geometric evolution problems
		\cite{Dziuk2002,DeckelnickDziuk2009}, they are generally regarded as
		auxiliary procedures for improving mesh quality rather than intrinsic
		components of the evolution system. For completeness, the A-WAR algorithm based on adaptive monitor
		functions is provided in Appendix B. This strategy extends the standard
		arc-length redistribution procedure by incorporating local geometric
		information, including curvature and curvature variation, into the
		monitor function, together with an adaptive selection mechanism that
		determines suitable monitor functions according to the geometric
		features of the evolving curve. This auxiliary procedure provides a
		practical approach for improving mesh quality.
	\end{rem}

\section{Numerical experiments}\label{sec5}
In this section, we present numerical experiments for the A-BDF$k$-FDMs, the A-RLM-BDF$k$-FDMs ($k=1,2$) and the A-WAR algorithm. These experiments are conducted to investigate their convergence properties and demonstrate their effectiveness in practical computations.

For the implementation of our numerical method, an initial configuration 
\((\mathbf{X}^{0}, V^{0}, \kappa^{0})\) is required. or the A-RLM-BDF$k$-FDMs, the relaxation variable $q$ is additionally introduced, and its initial value is set as $q^{0}=1$ according to its equilibrium state. To obtain a compatible starting
value for the normal velocity \(V^{0}\) and the mean curvature \(\kappa^{0}\),
we begin with a smooth initial curve \(\Gamma_{0}\), for which both the curvature 
\(\kappa^{0} = -\mathbf{n}\cdot \partial_{ss} \mathbf{X}^{0}\) and the velocity 
\(V^{0} = \partial_{ss}\kappa^{0} + \tfrac{1}{2} (\kappa^{0})^{3}\) can be computed 
explicitly. These quantities are then adopted as the initial approximation 
\((\mathbf{X}^{0},V^{0}, \kappa^{0})\) in our iterative solver. 
The stopping criterion for the iteration is set to \(\mathrm{tol} = 10^{-8}\), and we select the relax time constant $\mathcal J=0.5$.

\begin{example}[Convergence tests]
In this example, we assess the accuracy and convergence rate of the A-BDF$k$-FDMs, the A-RLM-BDF$k$-FDMs and the
A-WAR algorithm.
The initial curve is chosen as the unit circle,
\[
\mathbf{X}^0(\rho_j)
    = \left(\cos(2\pi \rho_j),\, \sin(2\pi \rho_j)\right)^{\mathsf T},
    \qquad 0 \le j \le M-1.
\]
Following \cite{barrett2008parametric}, the exact solution to the Willmore flow~\eqref{WM} is given by
\[
\mathbf{X}(\rho,t) = R(t)\,\mathbf{X}^0(\rho), \qquad
V(\rho,t) = \tfrac12 R(t)^{-3}, \qquad
\kappa(\rho,t) = R(t)^{-1},
\qquad
q(t) = 1,
\]
for \(\rho \in \mathbb{T}\) and \(t \ge 0\), where \(R(t) = (1+2t)^{1/4}\).
We introduce the combined geometric variable \(\mathbf{U} := (\mathbf{X}, V, \kappa)\) together with its numerical approximation at \(t = T\), defined by
\[
\mathbf{U}_{h_\ell}^{N_\ell}
    := \left(\mathbf{X}_{h_\ell}^{N_\ell},\, V_{h_\ell}^{N_\ell},\, \kappa_{h_\ell}^{N_\ell}\right),
    \qquad
    N_\ell = \frac{T}{\Delta t_\ell}.
\]
The error and the associated experimental convergence order are defined by
\[
  e^h_\ell = \left\| \mathbf{U}_{h_\ell}^{N_\ell} - \mathbf{U}(\cdot,T) \right\|_\infty,
  \qquad 
  order_\ell =
    \frac{\log\!\left(e^h_\ell / e^h_{\ell+1}\right)}
         {\log\!\left(\Delta t_\ell / \Delta t_{\ell+1}\right)}.
\]
To facilitate the simultaneous measurement of temporal and spatial convergence rates, we impose the relation \(h^{2} \approx \Delta t^{\,k}\).
Accordingly, the pair \((h_\ell, \Delta t_\ell)\) is updated as
\begin{equation*}
\left(h_{\ell+1}, \Delta t_{\ell+1}\right) =
\begin{cases}
\left(h_\ell/2,\ \Delta t_\ell/4\right), & \text{BDF1},\\[3mm]
\left(h_\ell/2,\ \Delta t_\ell/2\right), & \text{BDF2},\\[3mm]
\left(h_\ell/2^{3/2},\ \Delta t_\ell/2\right), & \text{BDF3},\\[3mm]
\left(h_\ell/4,\ \Delta t_\ell/2\right), & \text{BDF4}.
\end{cases}
\end{equation*}
Figs.~\ref{fig:ABDFktimeorder}--\ref{fig:AWARtimeorder} present the errors obtained from the A-BDF$k$-FDMs, the A-RLM-BDF$k$-FDMs and the A-WAR algorithm. In both settings, the numerical results exhibit convergence rates that agree remarkably well with the theoretical predictions $O(\Delta t^{\,k})$, thereby confirming first- through fourth-order temporal accuracy for BDF1--BDF4. For the A-RLM-BDF$k$-FDMs, the
computed errors for BDF1 and BDF2 also achieve the expected temporal
convergence rates of first and second order, respectively. Moreover, due to the prescribed coupling between the spatial and temporal step sizes, the computations also demonstrate a second-order convergence rate in space, which is fully consistent with the expected accuracy $O(h^{2})$.

\begin{figure}[!htbp]
    \centering
    \includegraphics[width=0.85\linewidth]{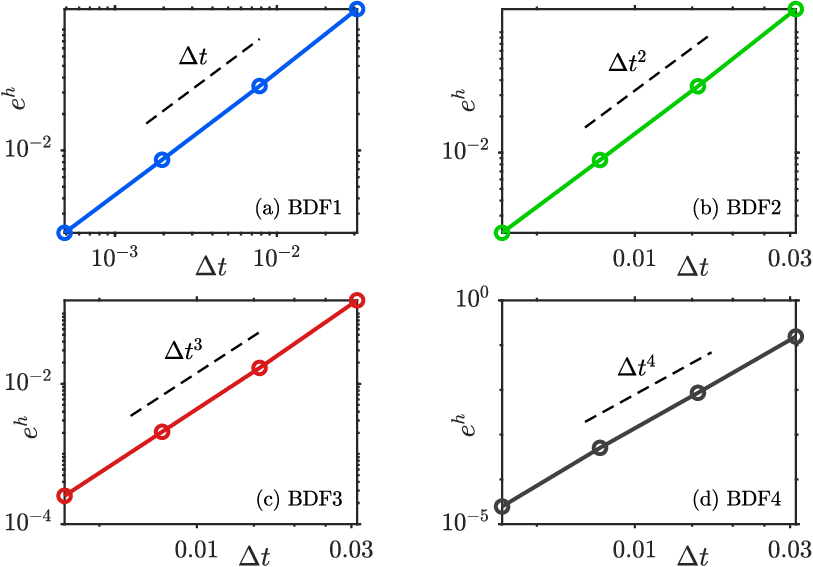}
    \caption{Plot of temporal errors $e^{h}$ for the A-BDF$k$-FDM using BDF$k$ time discretizations at $T=2$:  
(a) BDF1, (b) BDF2, (c) BDF3, and (d) BDF4.}
    \label{fig:ABDFktimeorder}
\end{figure}
\begin{figure}
	\centering
	\includegraphics[width=0.42\linewidth]{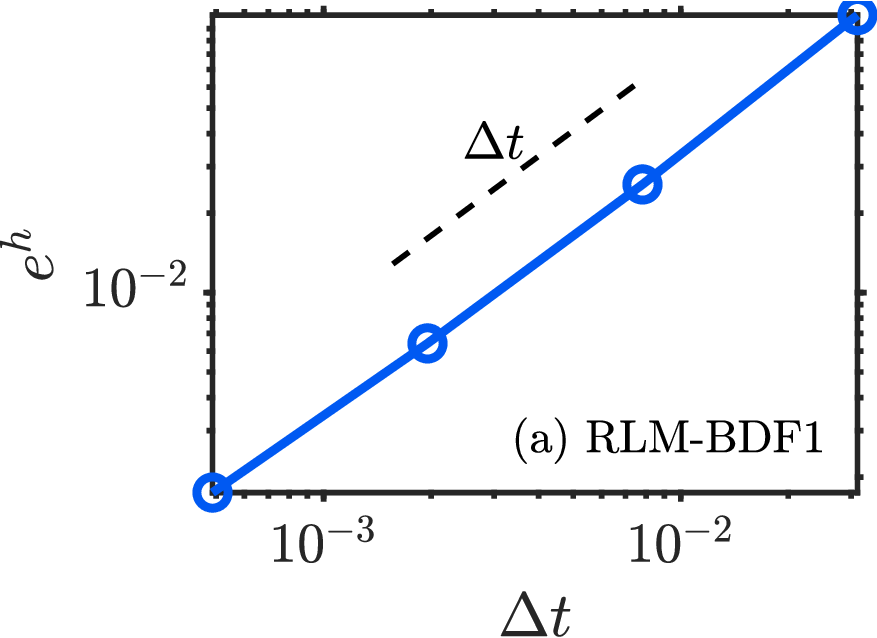}
	\hspace{0.04\textwidth}
	\includegraphics[width=0.42\linewidth]{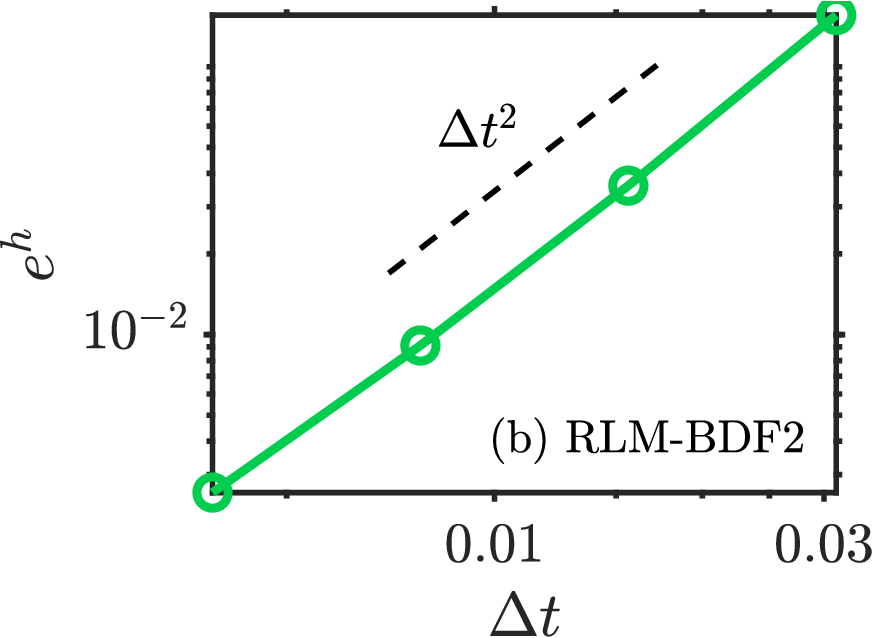}
	\caption{Plot of temporal errors $e^{h}$ for the A-RLM-BDF$k$-FDM using BDF$k$ time discretizations at $T=2$:  
		(a) BDF1, (b) BDF2.}
	\label{fig:ARLMBDFktimeorder}
\end{figure}
\begin{figure}[!htbp]
	\centering
	\includegraphics[width=0.85\linewidth]{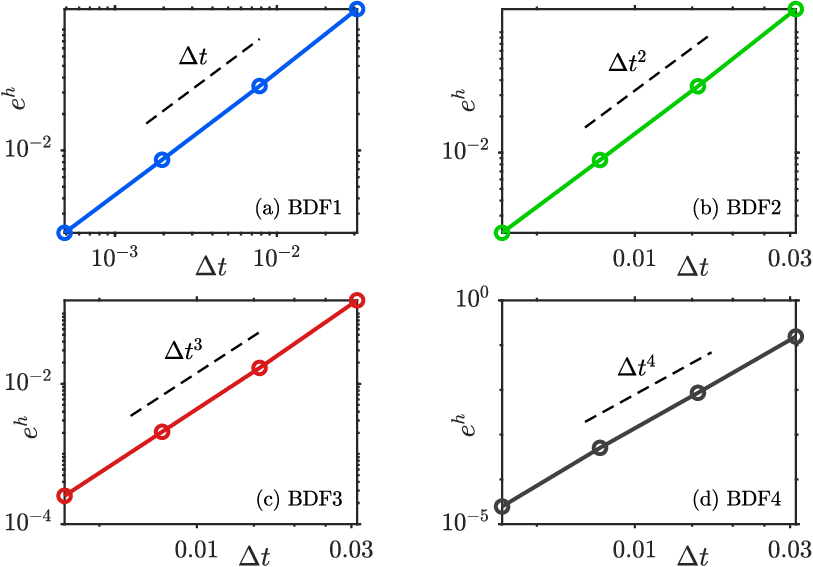}
	\caption{Plot of temporal errors $e^{h}$ for the A-WAR algorithm using BDF$k$ time discretizations at $T=2$:  
		(a) BDF1, (b) BDF2, (c) BDF3, and (d) BDF4.}
	\label{fig:AWARtimeorder}
\end{figure}

\end{example}
\begin{example}
	In this example, we compare the mesh distribution and mesh quality of three numerical methods, namely the BDF$k$-FDMs, the A-WAR algorithm, and the A-BDF$k$-FDMs, during the curve evolution process. For convenience, we restrict our attention to the case \(k=1\). We consider three types of initial curves and examine the corresponding grid point distributions generated by the three methods. 
	It can be observed from Fig. \ref{fig:mesh1}, Fig. \ref{fig:mesh2} and Fig. \ref{fig:mesh3} that, as the geometric complexity of the initial curves increases, both adaptive methods proposed in this work are able to maintain good mesh quality throughout the evolution. This, in turn, ensures the reliability and correctness of the computed curve evolution.

	In addition, to evaluate the mesh quality, several quantitative measures are employed. 
	In particular, the ratio between the maximum and minimum arc lengths,
	\[
	R_1(\Delta s):=\frac{\max(\Delta s)}{\min(\Delta s)},
	\]
	is used as a standard indicator of mesh uniformity. 
	In addition, the mesh quality measure
	\[
	R_2(M_f, \Delta s):=\frac{\max\left(M_f \Delta s\right)}{\min\left(M_f \Delta s\right)}
	\]
	is adopted to assess the consistency of the mesh in the equidistribution sense induced by the monitor function \(M_f\).
	As shown in Fig.~\ref{fig:mesh1x}, Fig.~\ref{fig:mesh2x} and Fig.~\ref{fig:mesh3x}, the two adaptive methods proposed in this work, namely the A-WAR algorithm and the A-BDF$k$-FDMs, are able to rapidly form a near-equidistributed mesh at the early stage of the evolution. 
	For both adaptive methods, the two mesh quality indicators remain close to unity throughout the entire computation, indicating good mesh uniformity and a strong adherence to the equidistribution principle induced by the monitor function.

	In contrast, the BDF$k$-FDMs exhibit markedly different mesh evolution behaviors. 
	As shown in Fig.~\ref{fig:mesh1x}, the mesh quality indicators of the BDF$k$-FDMs remain bounded during the evolution and gradually converge to finite constants, indicating that, for certain initial curves, the mesh maintains a certain level of overall stability. 
	However, these limiting values are clearly larger than unity, implying that the resulting mesh does not attain an ideal mesh distribution, and its uniformity and adaptivity are significantly inferior to those of the adaptive methods. 
	In Fig.~\ref{fig:mesh2x}, both mesh quality indicators deteriorate rapidly within a short time, indicating that mesh degradation severely impairs the long-time numerical computation. 
	More critically, in Fig.~\ref{fig:mesh3x}, both mesh quality indicators experience a sharp growth during the intermediate stage of the evolution and later decrease to relatively small values; however, as can be observed from Fig.~\ref{fig:mesh3}, the corresponding evolution curve has already become completely incorrect.
	
	These results indicate that, for initial curves with high curvature or pronounced curvature variations, the BDF$k$-FDMs are prone to severe mesh distortion and strong point clustering during the evolution, which significantly undermines the reliability of the numerical solution. 
	In some cases, such mesh degradation may even lead to a complete failure of the numerical computation. 
	In contrast, the A-WAR algorithm and the A-BDF$k$-FDMs proposed in this work are able to effectively control the mesh distribution and maintain good mesh quality throughout the evolution, thereby significantly enhancing the stability and reliability of the numerical simulations.

	
	\begin{figure}[!htbp]
		\centering
		\includegraphics[width=0.25\linewidth]{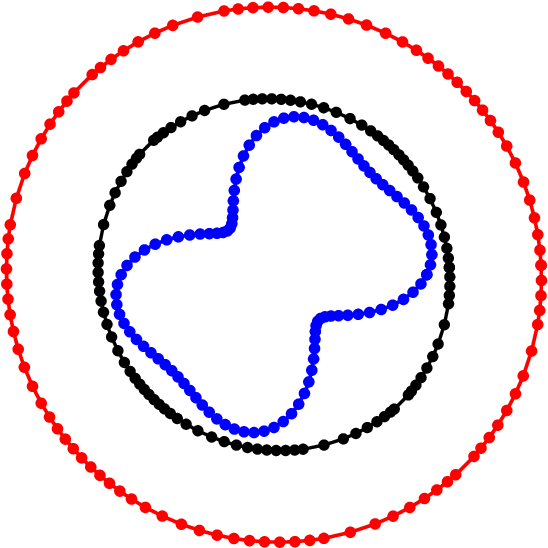}
		\hspace{0.05\textwidth}
		\includegraphics[width=0.25\linewidth]{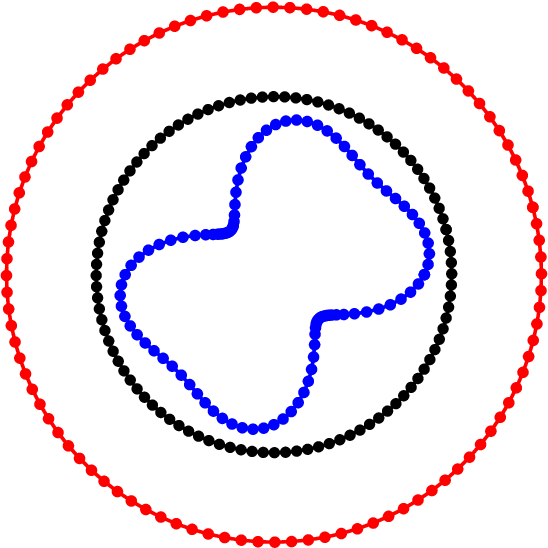}
		\hspace{0.05\textwidth}
		\includegraphics[width=0.25\linewidth]{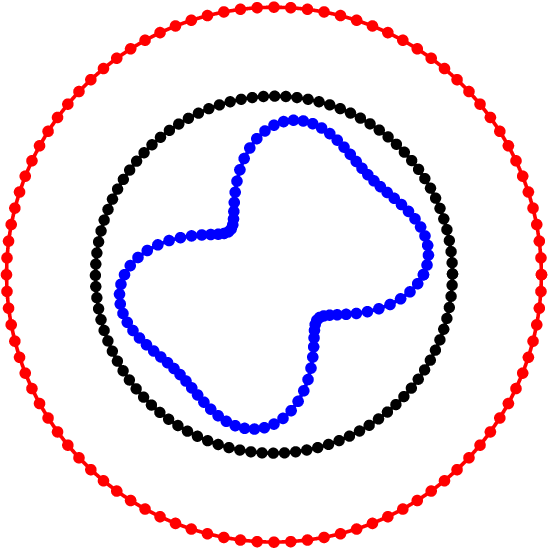}
		\caption{
			Mesh evolution computed by different numerical methods for the initial curve defined by
			$x =\cos(2\pi\rho) \left[1 + 0.3\sin(4\pi\rho) + 0.2\cos(8\pi\rho)\right], y = \sin(2\pi\rho)\left[1 + 0.3\sin(4\pi\rho) + 0.2\cos(8\pi\rho)\right]$.
			From left to right, the results correspond to the BDF$k$-FDMs, the A-WAR algorithm,
			and the A-BDF$k$-FDMs. The blue, black, and red curves represent the configurations
			at \( t = 0 \), \( t = 1.5 \), and \( t = 10 \), respectively.
		}
		\label{fig:mesh1}
	\end{figure}

	\begin{figure}[!htbp]
		\centering
		\includegraphics[width=0.4\linewidth]{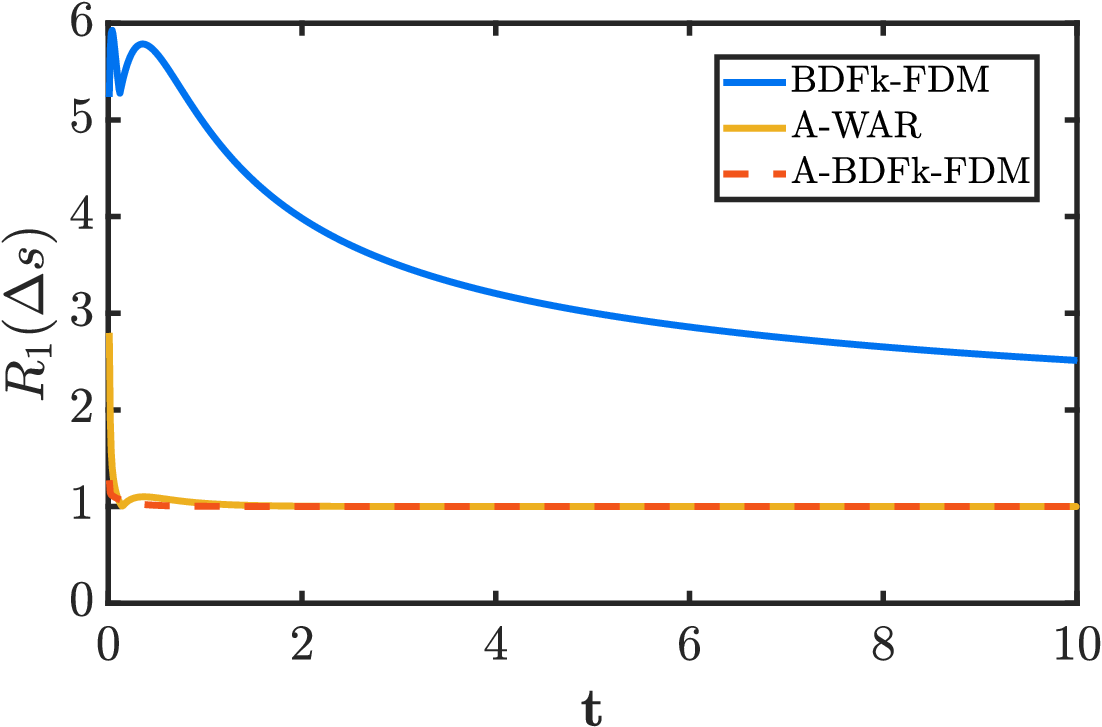}
		\hspace{0.05\textwidth}
		\includegraphics[width=0.4\linewidth]{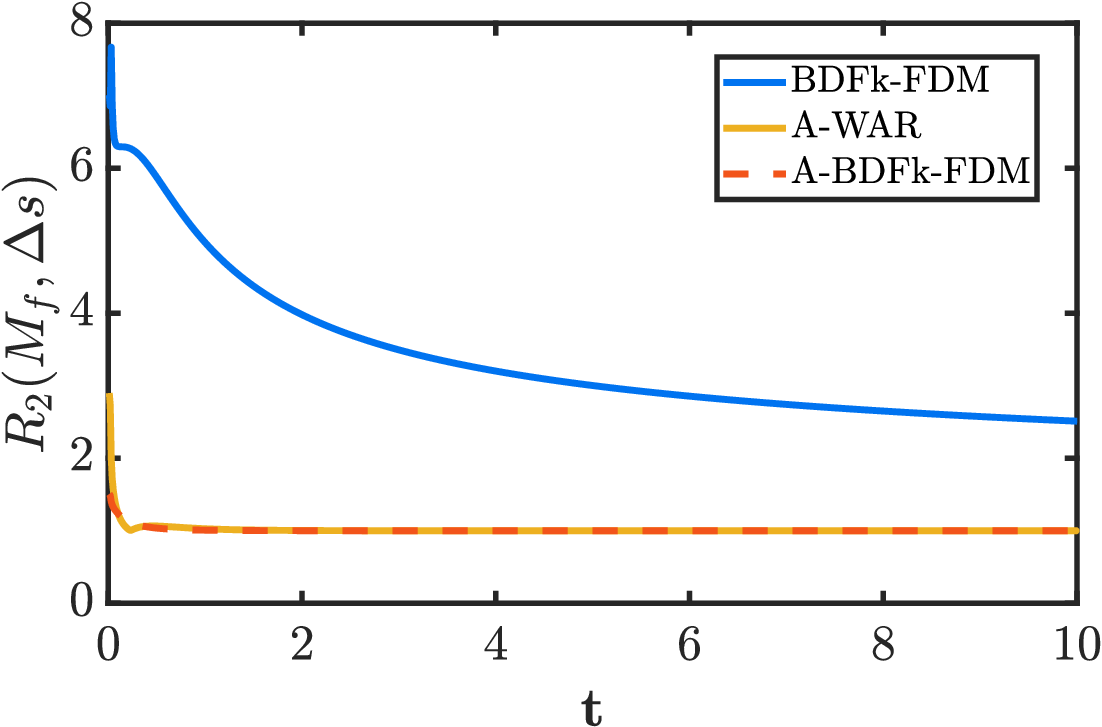}
		\caption{Time evolution of the mesh quality indicators
			$R_1(\Delta s)$ (left) and $R_2(M_f,\Delta s)$ (right)
			for the initial curve shown in Fig~\ref{fig:mesh1}.
			The results correspond to the BDF$k$-FDMs, the A-WAR algorithm,
			and the A-BDF$k$-FDMs.}
		\label{fig:mesh1x}
	\end{figure}

	\begin{figure}[!htbp]
		\centering
		\includegraphics[width=0.25\linewidth]{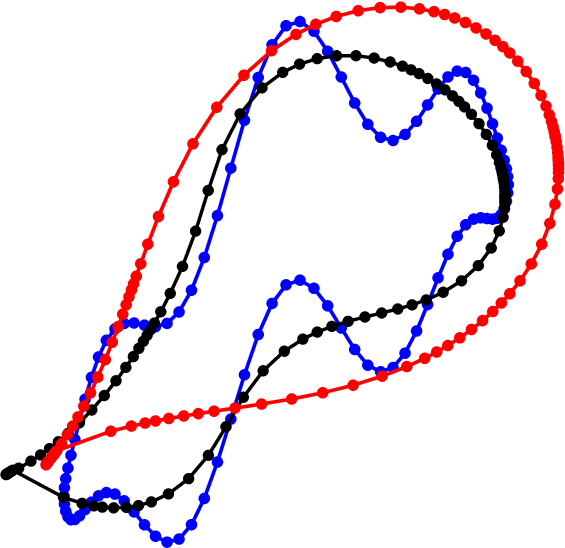}
		\hspace{0.05\textwidth}
		\includegraphics[width=0.25\linewidth]{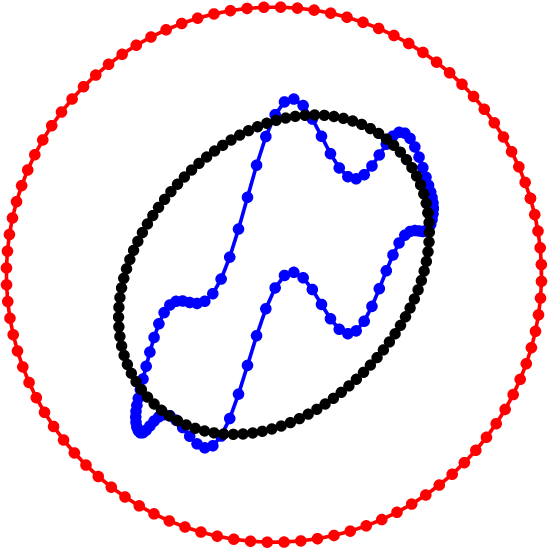}
		\hspace{0.05\textwidth}
		\includegraphics[width=0.25\linewidth]{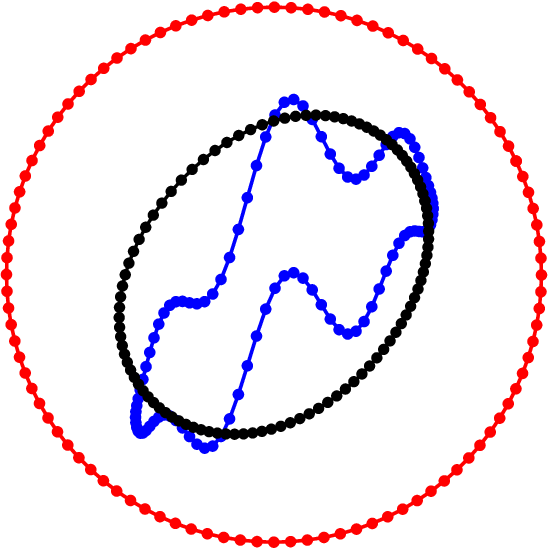}
		\caption{Mesh evolution computed by different numerical methods for the initial curve defined by
		$x = 1.2 \cos\left( 2\pi \rho \right), y = 0.5 \sin\left( 2\pi \rho \right)+ \sin\left( \cos\left( 2\pi \rho \right) \right)
			+ \sin\left( 2\pi \rho \right)\left[0.2 + \sin\left( 2\pi \rho \right)\sin^2\left( 6\pi\rho \right)\right]$.
			From left to right, the results correspond to the BDF$k$-FDMs, the A-WAR algorithm,
			and the A-BDF$k$-FDMs. The blue, black, and red curves represent the configurations
			at \( t = 0 \), \( t = 4 \), and \( t = 10 \), respectively.}
		\label{fig:mesh2}
	\end{figure}

	\begin{figure}[!htbp]
		\centering
		\includegraphics[width=0.4\linewidth]{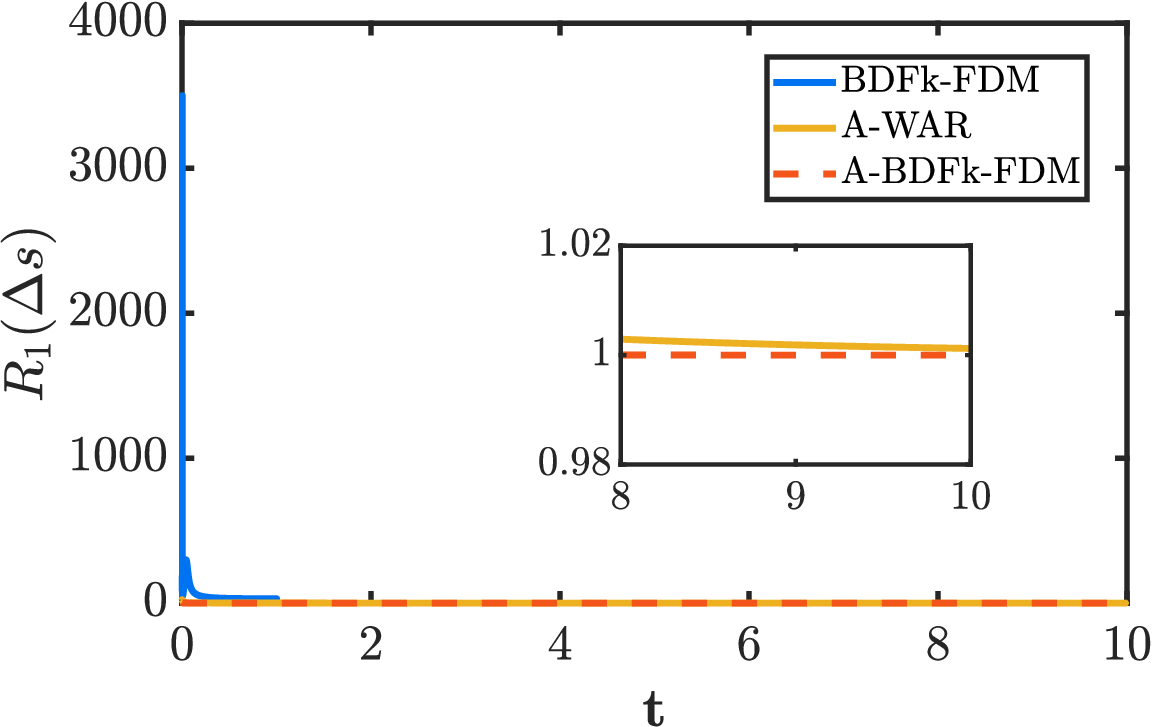}
		\hspace{0.05\textwidth}
		\includegraphics[width=0.4\linewidth]{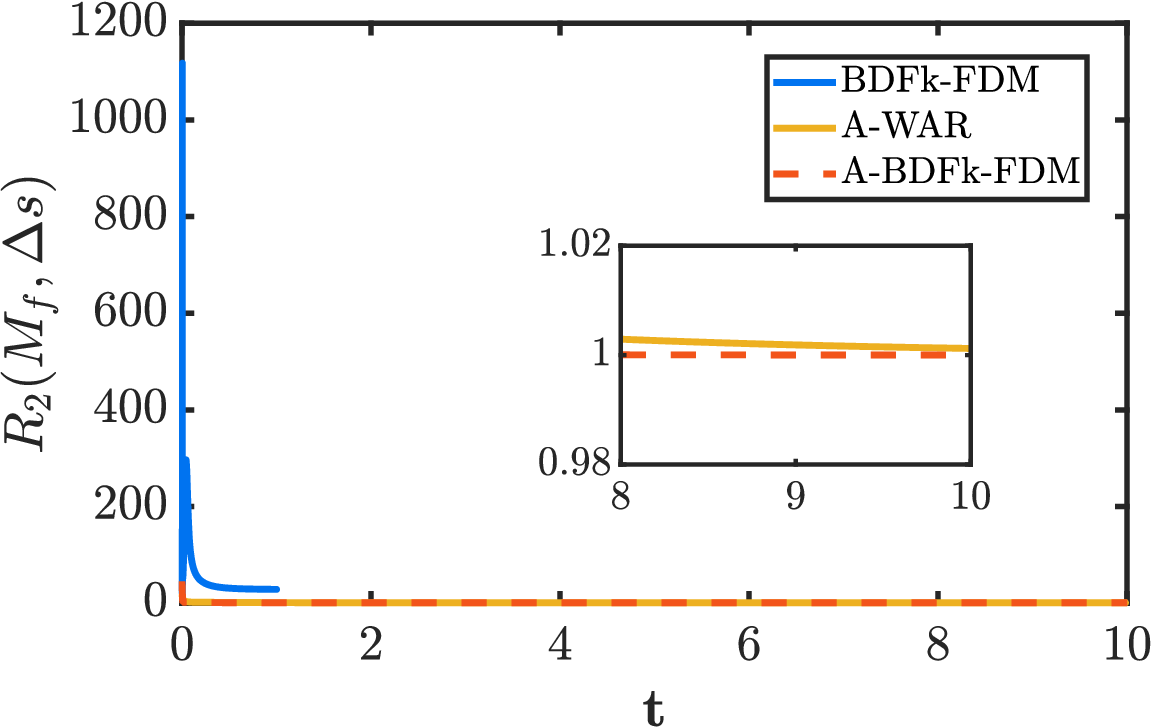}
		\caption{Time evolution of the mesh quality indicators
			$R_1(\Delta s)$ (left) and $R_2(M_f,\Delta s)$ (right)
			for the initial curve shown in Fig~\ref{fig:mesh2}.
			The results correspond to the BDF$k$-FDMs, the A-WAR algorithm,
			and the A-BDF$k$-FDMs.}
		\label{fig:mesh2x}
	\end{figure}

	\begin{figure}[!htbp]
		\centering
		\raisebox{5mm}{\includegraphics[width=0.32\linewidth]{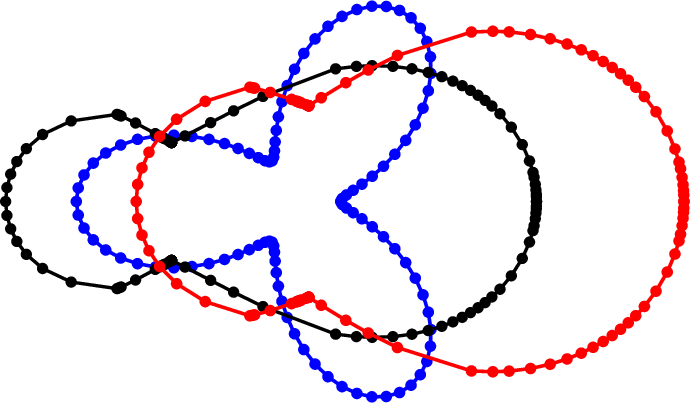}}
		\hspace{0.02\textwidth}
		\includegraphics[width=0.25\linewidth]{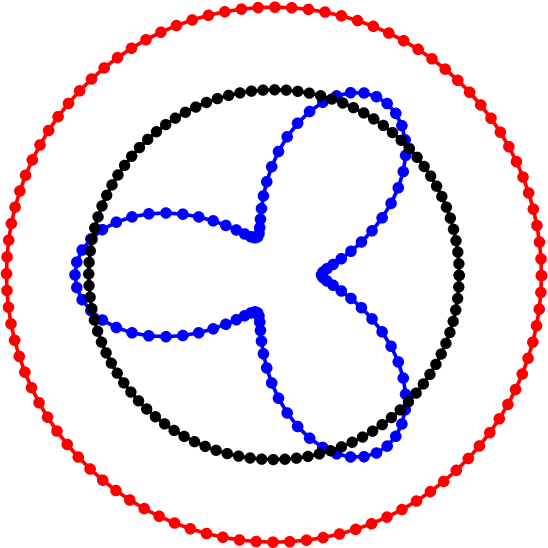}
		\hspace{0.05\textwidth}
		\includegraphics[width=0.25\linewidth]{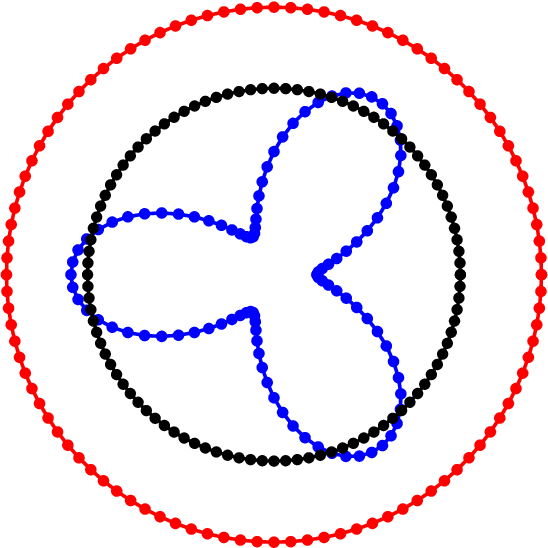}
		\caption{Mesh evolution computed by different numerical methods for the initial curve defined by
			$x= \cos(2\pi\rho)\left[ 1 - 0.65 \cos(6\pi\rho) \right] , 
			y= \sin(2\pi\rho) \left[1 - 0.65 \cos(6\pi\rho) \right] $.
			From left to right, the results correspond to the BDF$k$-FDMs, the A-WAR algorithm,
			and the A-BDF$k$-FDMs. The blue, black, and red curves represent the configurations
			at \( t = 0 \), \( t = 1.5 \), and \( t = 10 \), respectively.}
		\label{fig:mesh3}
	\end{figure}
	
	\begin{figure}[!htbp]
		\centering
		\includegraphics[width=0.4\linewidth]{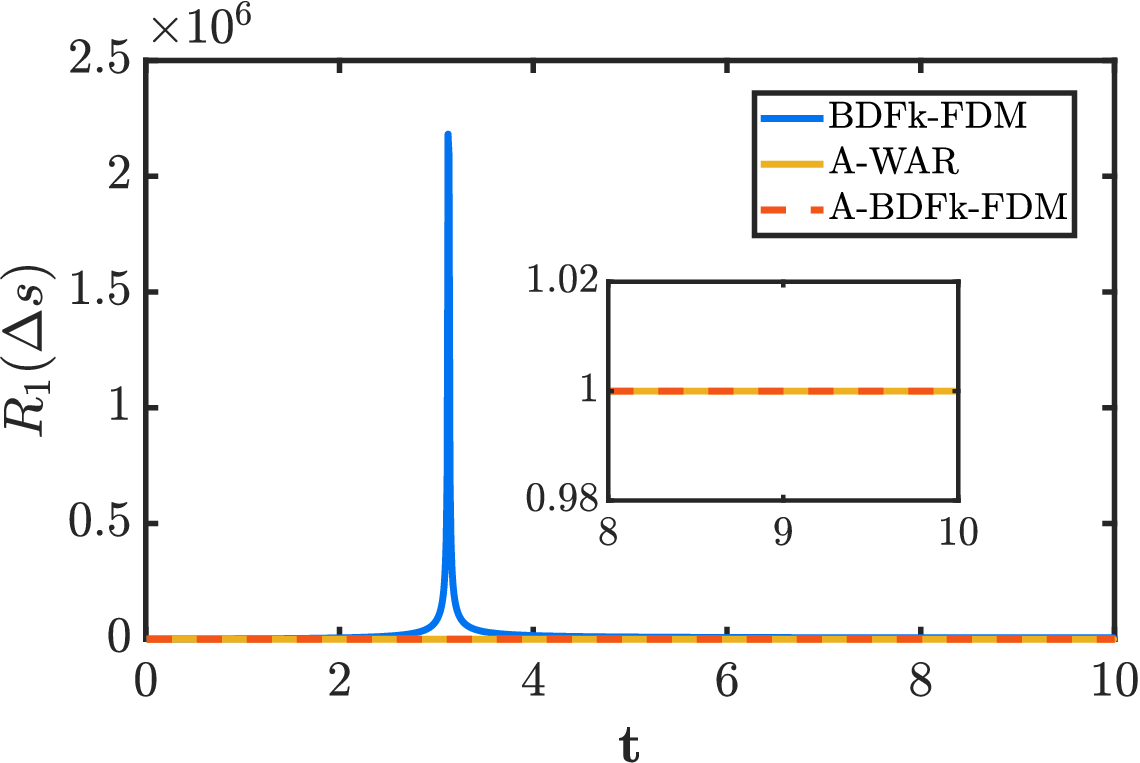}
		\hspace{0.05\textwidth}
		\includegraphics[width=0.4\linewidth]{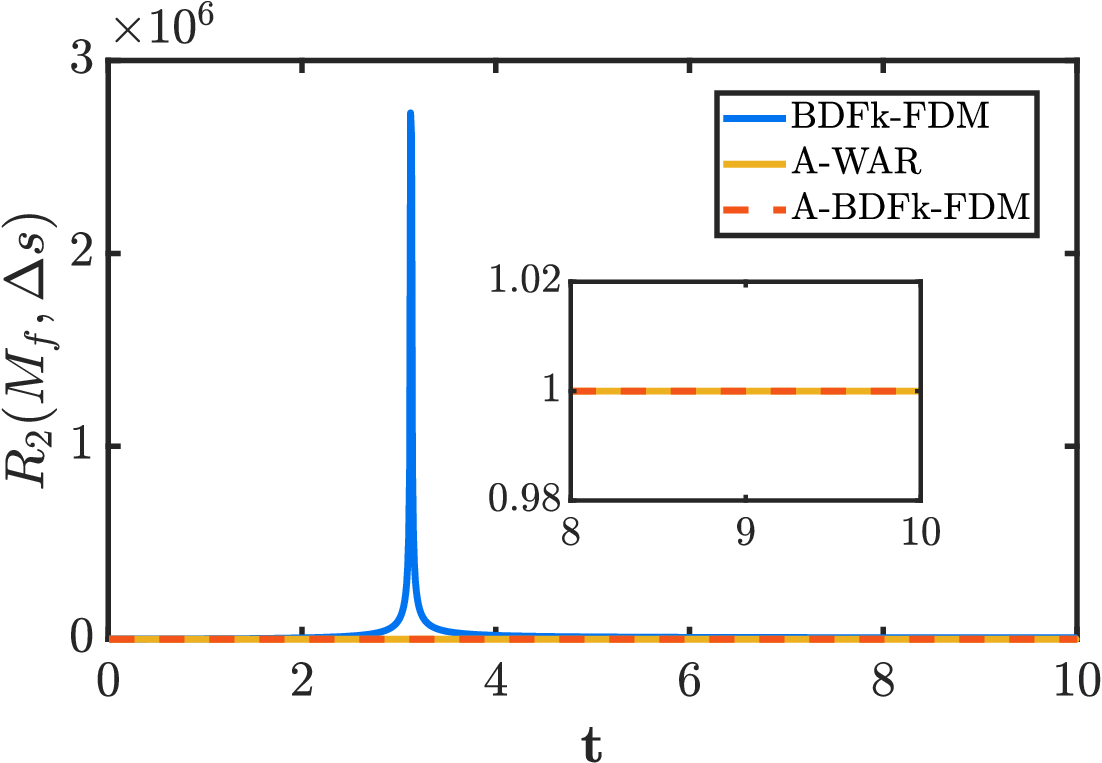}
		\caption{Time evolution of the mesh quality indicators
			$R_1(\Delta s)$ (left) and $R_2(M_f,\Delta s)$ (right)
			for the initial curve shown in Fig~\ref{fig:mesh3}.
			The results correspond to the BDF$k$-FDMs, the A-WAR algorithm,
			and the A-BDF$k$-FDMs.}
		\label{fig:mesh3x}
	\end{figure}
\end{example}
\begin{example} 
	(Energy stability) To verify the unconditional energy stability property of the proposed A-RLM-BDF$k$-FDMs and to investigate the influence of the relaxation parameter $\beta$, we consider the evolution of the Willmore energy for different values of $\beta$. The initial curve is chosen as
	\[
	\mathbf{X}(\rho,0) = \left( 1.5 \cos(2\pi \rho_j),\, \sin(2\pi \rho_j)\right)^{\mathsf T}, \qquad 0 \le j \le M-1.
	\]
	The time step size is set as $\Delta t = 0.01$, and the final simulation time is $T = 2$. Three different relaxation parameters, namely $\beta = 0.1$, $\beta = 0.01$, and $\beta = 0.001$, are considered. For comparison, the accurate energy $W_{\mathrm{accurate}}(t)$ is computed using the A-RLM-BDF$k$-FDMs with a refined time step size $\Delta t = 10^{-5}$ and $\beta = 10^{-5}$.
	
	The numerical results computed by the A-RLM-BDF1-FDM and A-RLM-BDF2-FDM are depicted in Fig.~\ref{fig:RLM_energy_bdf1} and Fig.~\ref{fig:RLM_energy_bdf2}, respectively, where $W_{\mathrm{RLM}}(t)$ denotes the modified energy and $W(t)$ represents the original Willmore energy evaluated at the numerical solution. As observed from the left panels of both figures, A-RLM-BDF$k$-FDMs strictly preserve the unconditional energy stability property throughout the simulation. Moreover, as the relaxation parameter $\beta$ decreases, the modified energy curves rapidly converge to the accurate energy $W_{\mathrm{accurate}}(t)$. To further examine the impact of the relaxation term, the middle panels demonstrate the energy difference $|W_{\mathrm{RLM}} - W|$. It is clear that reducing $\beta$ from $0.1$ to $0.001$ leads to a substantial decrease in the deviation between the modified and original energies by several orders of magnitude, confirming that the auxiliary term becomes virtually negligible for small values of $\beta$. Finally, the right panels illustrate the absolute error $|W_{\mathrm{RLM}} - W_{\mathrm{accurate}}|$, where smaller values of $\beta$ consistently yield superior numerical accuracy. 
	
	Through extensive numerical investigations, we observe that the nonlinear iterative solvers for other relaxation approaches fail to converge when combined with the proposed adaptive methods. By comparison, the proposed A-RLM-BDF$k$-FDMs exhibit robust convergence and effectively preserve the energy stability property. Furthermore, decreasing the relaxation parameter $\beta$ enhances the accuracy of the modified energy, making it increasingly consistent with the original Willmore energy.
	\begin{figure}
		\centering
		\includegraphics[width=0.3\linewidth]{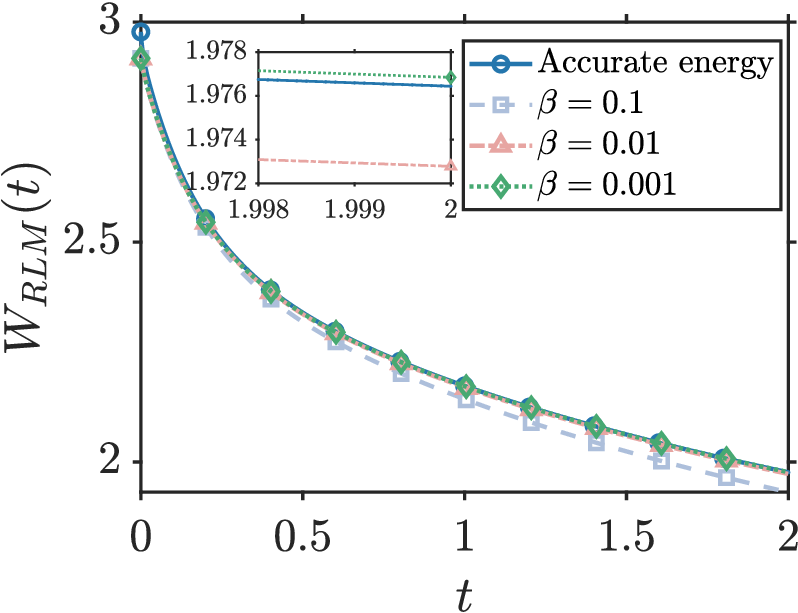}
		\hspace{0.02\textwidth}
		\includegraphics[width=0.3\linewidth]{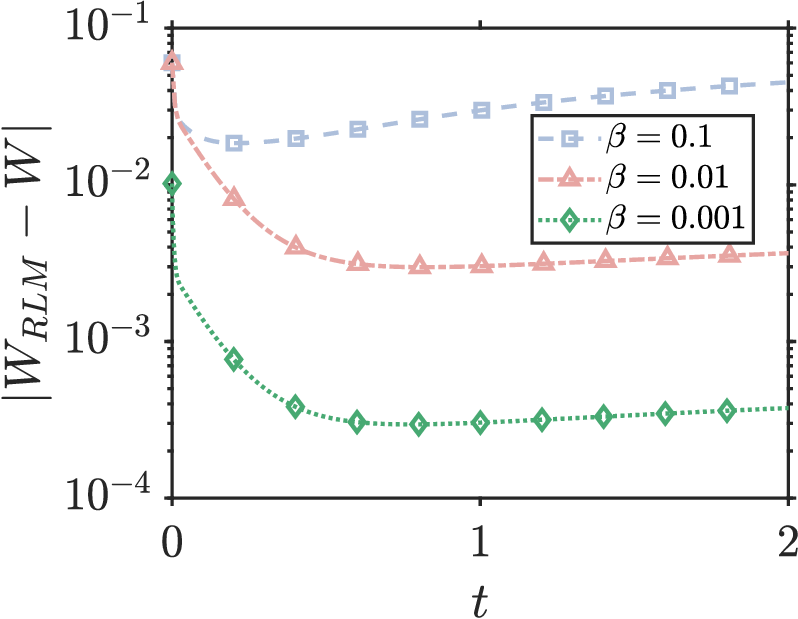}
		\hspace{0.02\textwidth}
		\includegraphics[width=0.3\linewidth]{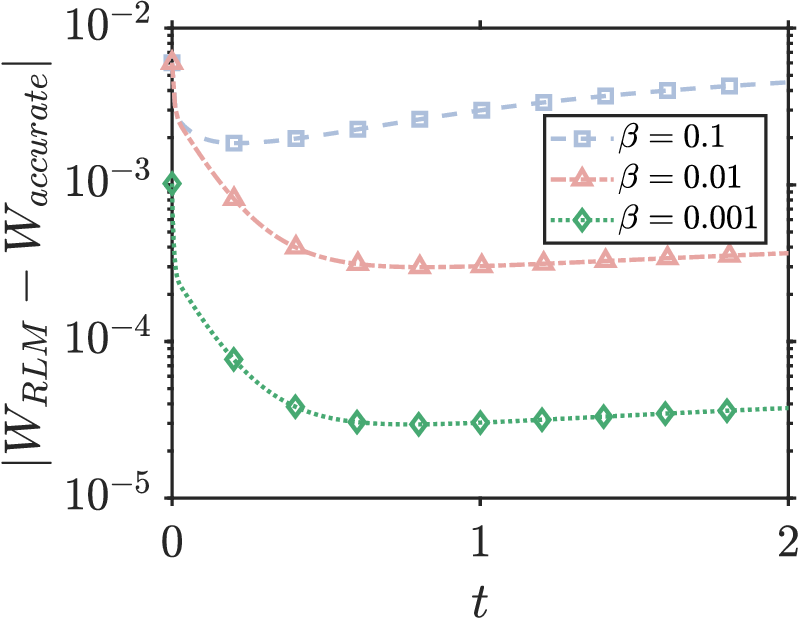}
		\caption{Temporal evolution of the Willmore energy for the A-RLM-BDF1-FDM. Left: modified energy $W_{\mathrm{RLM}}(t)$ compared with the accurate energy $W_{\mathrm{accurate}}(t)$ (inset: zoomed-in view near $t=2$). Middle: energy difference $|W_{\mathrm{RLM}} - W|$ between the modified and original energies. Right: absolute error $|W_{\mathrm{RLM}} - W_{\mathrm{accurate}}|$ relative to the accurate energy.}
		\label{fig:RLM_energy_bdf1}
	\end{figure}
	\begin{figure}
		\centering
		\includegraphics[width=0.3\linewidth]{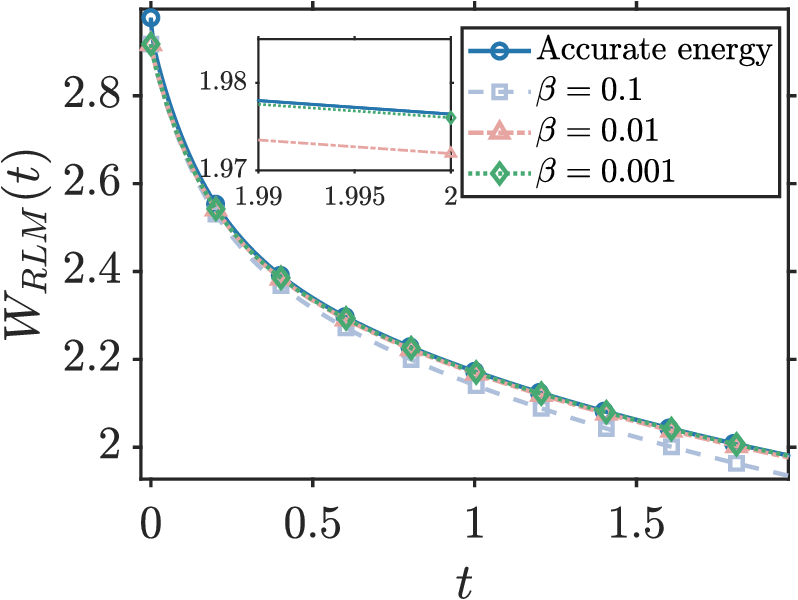}
		\hspace{0.02\textwidth}
		\includegraphics[width=0.3\linewidth]{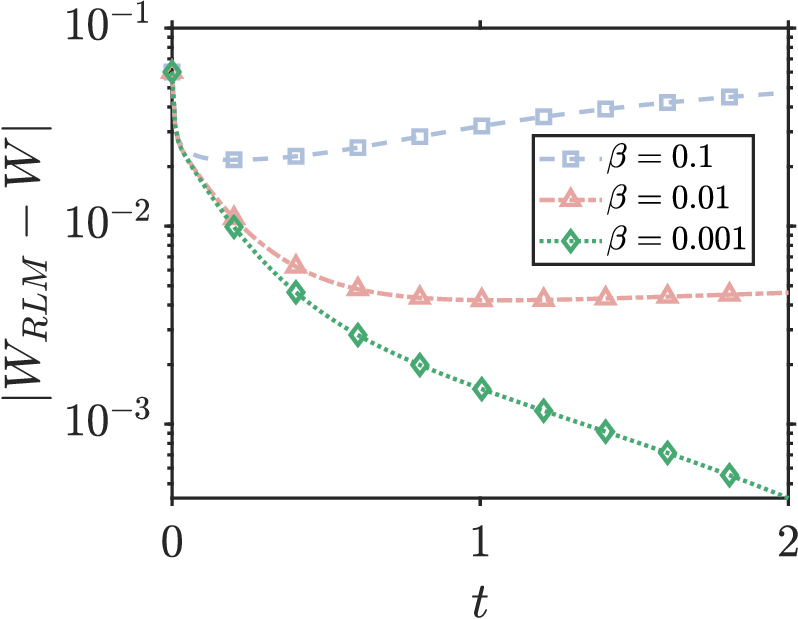}
		\hspace{0.02\textwidth}
		\includegraphics[width=0.3\linewidth]{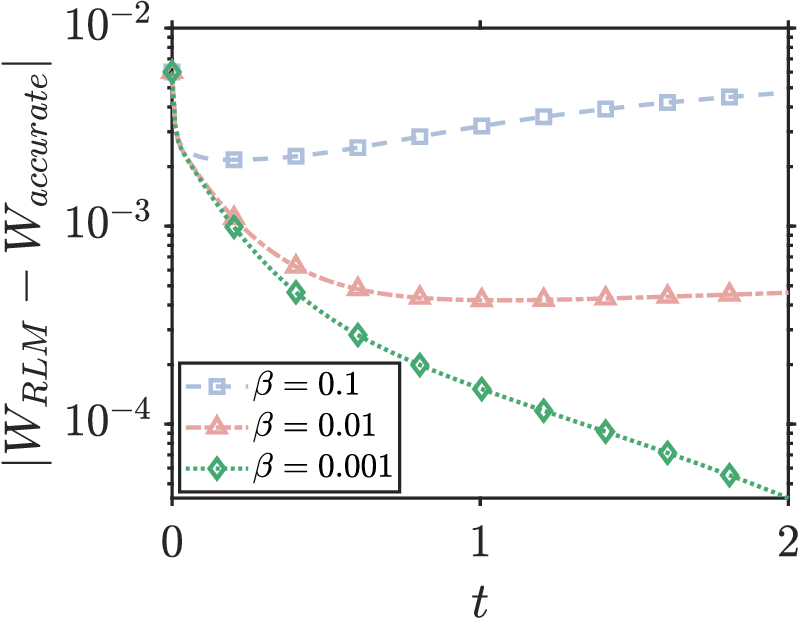}
		\caption{Temporal evolution of the Willmore energy for the A-RLM-BDF2-FDM. Left: modified energy $W_{\mathrm{RLM}}(t)$ compared with the accurate energy $W_{\mathrm{accurate}}(t)$ (inset: zoomed-in view near $t=2$). Middle: energy difference $|W_{\mathrm{RLM}} - W|$ between the modified and original energies. Right: absolute error $|W_{\mathrm{RLM}} - W_{\mathrm{accurate}}|$ relative to the accurate energy.}
		\label{fig:RLM_energy_bdf2}
	\end{figure}
\end{example}

\begin{example}(Evolution from simple to complex initial shapes)
We present the evolution of planar curves under the Willmore flow for a sequence of test cases with increasing geometric complexity, as shown in Fig.~\ref{fig:evolution}. The initial curves range from relatively simple, nearly convex shapes to highly nonconvex configurations featuring narrow necks and pronounced geometric oscillations.
For all considered initial curves, both the A-WAR algorithm and the A-BDF$k$-FDMs are able to compute the curve evolution over the entire time interval without numerical breakdown. As illustrated in the figure, the curves exhibit an approximately uniform expansion while their geometries become progressively smoother. Notably, even for the most intricate nonconvex initial shapes, the numerical evolution remains stable and free of spurious oscillations or loss of resolution.
The incorporation of adaptivity plays a crucial role in ensuring the robustness of the numerical scheme, allowing accurate resolution of geometric features and maintaining stability throughout the entire evolution process.

	\begin{figure}[!t]
		\centering
		\includegraphics[width=0.23\linewidth]{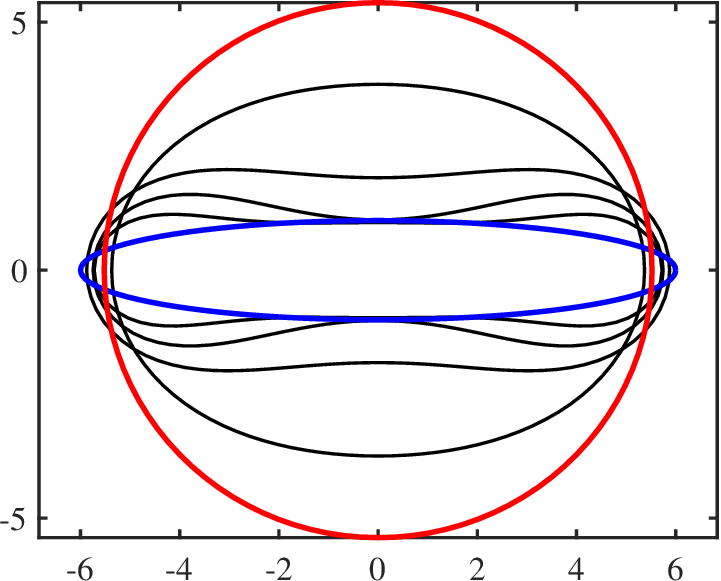} 
		\hspace{0.02\textwidth}
		\raisebox{8mm}{\includegraphics[width=0.25\linewidth]{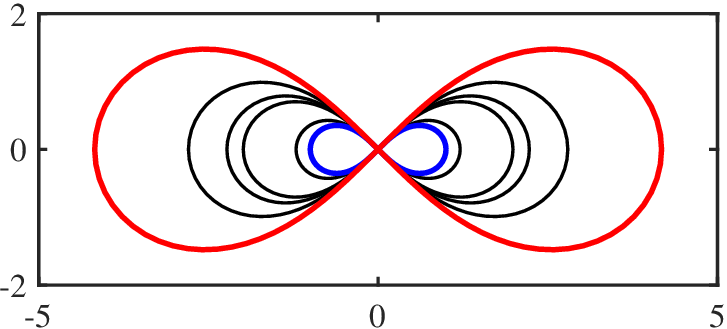}
			\hspace{0.02\textwidth}}
		\includegraphics[width=0.25\linewidth]{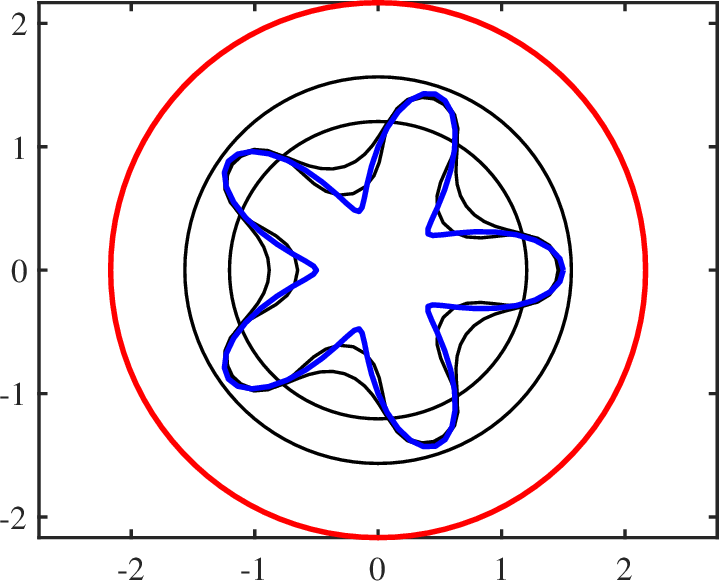}
		\\
		\includegraphics[width=0.25\linewidth]{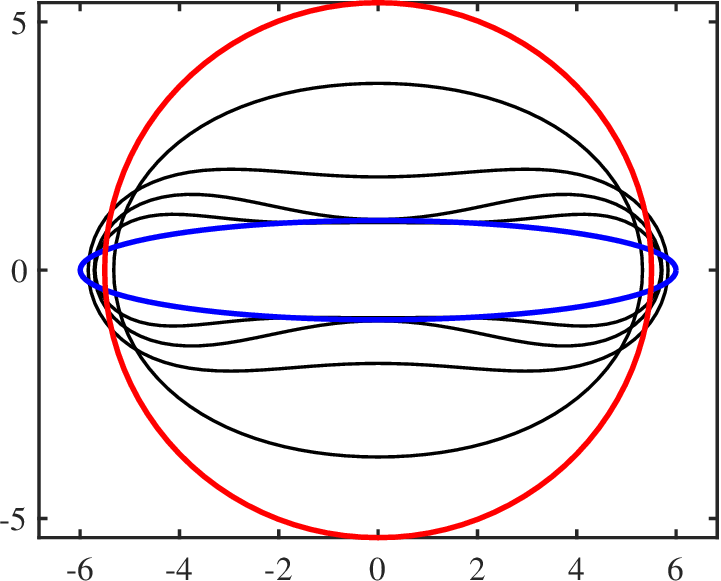}
		\hspace{0.02\textwidth}
		\raisebox{8mm}{\includegraphics[width=0.25\linewidth]{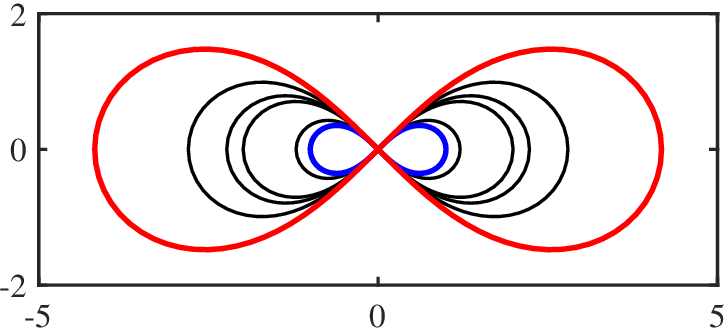}
			\hspace{0.02\textwidth}}
		\includegraphics[width=0.25\linewidth]{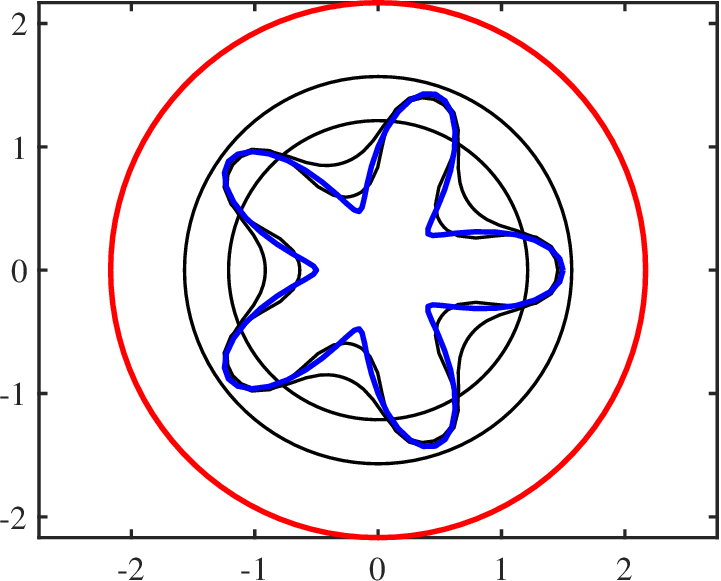}
		\caption{The first row corresponds to the A-WAR algorithm, while the second row shows the results obtained with the A-BDF$k$-FDMs (k=1). In each subplot, the initial curve is shown in blue, several intermediate states are plotted in black, and the final curve is displayed in red. Left column: the initial curve is given by $x(\rho)=6\cos(2\pi\rho)$ and $y(\rho)=\sin(2\pi\rho)$. The curves  are shown at times $t=0,1,5,20,80,300$. Middle column: the initial curve is given by $x=\frac{\cos(2\pi\rho)}{1+\sin^2(2\pi\rho)}, 
			y=\frac{\cos(2\pi\rho)\sin(2\pi\rho)}{1+\sin^2(2\pi\rho)}.$ The curves are shown at times $t=0,0.1,0.3,0.6,1.2,10$. Right column: the initial curve is given by $x=\left(1+0.5\cos(10\pi\rho)\right)\cos(2\pi\rho),
			y=\left(1+0.5\cos(10\pi\rho)\right)\sin(2\pi\rho).
			$The curves  are shown at times $t=0,0.002,0.015,0.05,2,10$.}
		\label{fig:evolution}
	\end{figure}
\end{example}

\section{Conclusions}\label{sec6}

In this paper, we develop adaptive moving mesh methods for the planar Willmore flow by incorporating a tangential velocity into the original geometric evolution equation. 
The tangential velocity is derived from the variational derivative of a mesh functional associated with a curvature-based monitor function, which enables dynamic redistribution of mesh points along the evolving interface. 
Based on this formulation, a fully adaptive moving mesh Willmore system is constructed, and its temporal and spatial discretizations lead to the A-BDF$k$-FDMs.
To achieve energy stability for the adaptive schemes, a RLM approach is introduced, resulting in the A-RLM-BDF$k$-FDMs. The proposed RLM-based schemes theoretically preserve the discrete Willmore energy stability law while retaining the adaptive moving mesh property. Moreover, additional adaptive strategies are developed to enhance the flexibility of the proposed framework.
Extensive numerical experiments demonstrate that the proposed methods effectively capture the evolution of interfaces with complex geometric features while maintaining high-quality meshes. The results also confirm that the RLM-based schemes provide accurate and energy-stable approximations for long-time simulations of the Willmore flow.
In future work, we will extend the proposed adaptive and energy-stable framework to more general two- and three-dimensional geometric flow problems and further develop efficient structure-preserving numerical methods with improved adaptivity and computational performance.

\appendix
\section{Choice of monitor function}
\label{app:monitor_fun_choice}
In this appendix, we provide a geometric motivation for the form of the monitor functions used in the main text by analyzing the local interpolation error of a
smooth curve approximated by a linear polygonal interpolant. 

To elucidate how local geometric features of an evolving curve influence interpolation accuracy, we consider a sufficiently smooth planar curve $\Gamma$ and focus on a local segment $\Gamma_i$ determined by two consecutive nodes $\mathbf{X}_i$ and $\mathbf{X}_{i+1}$. As illustrated in Fig.~\ref{fig:monitor_fun}, the local geometric interpolation error can be interpreted as the maximum normal deviation between the smooth curve and its linear polygonal interpolation. 
We denote the distance between the two consecutive nodes by $h_i = \left\|\mathbf{X}_{i+1} - \mathbf{X}_i\right\|.$ Since the normal distance between a curve and its linear interpolation is invariant under rigid motions, we may, without loss of generality, apply a local translation and rotation of coordinates. Specifically, the point $\mathbf{X}_i$ is mapped to the origin, and the direction of the linear interpolant connecting $\mathbf{X}_i$ and $\mathbf{X}_{i+1}$ is aligned with a reference axis. In the resulting local coordinate system, the curve segment $\Gamma_i$ can be represented in graph form as $\overline{\Gamma}(\overline{x}), ~\overline{x} \in [0,h_i],$ with the normalization $\overline{\Gamma}(0)=0$. Let $\overline{x}_* \in (0,h_i)$ denote a point at which the normal deviation between the curve and its linear interpolant attains its maximum. By the necessary condition for an extremum, we have $\overline{\Gamma}'(\overline{x}_*) = 0.$

Using the Taylor expansion of $\overline{\Gamma}(\overline{x})$ in the neighborhood of $\overline{x}=0$ and evaluating it at $\overline{x}=\overline{x}_*$, we obtain
\begin{equation}\label{eq:Gamma}
\overline{\Gamma}(\overline{x}_*)
=
\frac{\overline{x}_*^2}{2}\overline{\Gamma}''(\overline{x}_*)
-
\frac{\overline{x}_*^3}{6}\overline{\Gamma}'''(\overline{x}_*) + O(\overline{x}_*^4).
\end{equation}
We first examine the contribution of the second-order term. For a planar curve expressed in graph form, the curvature is given by
\[
\left|\kappa\right|
=
\frac{\left|\overline{\Gamma}''\right|}{\left[1+\left(\overline{\Gamma}'\right)^2\right]^{3/2}}.
\]
At the point $\overline{x}_*$, where $\overline{\Gamma}'(\overline{x}_*)=0$, this relation simplifies to
\[
\left|\overline{\Gamma}''(\overline{x}_*)\right| = \left|\kappa(\overline{x}_*)\right|.
\]
Consequently, the second-order term in the Taylor expansion contributes an error component proportional, in magnitude, to $\overline{x}_{*}^2|\kappa|$, indicating that in regions of mild curvature the local interpolation error is primarily curvature-driven.

We next consider the geometric interpretation of the third-order remainder term.
For convenience of description, we temporarily employ the arc-length parametrization and write the curve as $\mathbf{X}(s)$. The Frenet-Serret
relations for planar curves imply
\[
\mathbf{X}''(s) = -\kappa(s)\mathbf{n}(s), \qquad
\mathbf{X}'''(s) = -\partial_s \kappa(s)\mathbf{n}(s) + \kappa^2(s)\mat{\tau}(s),
\]
where $\mat{\tau}(s)$ and $\mathbf{n}(s)$ denote the unit tangent and normal vectors, respectively. It follows that the third derivative of $\mathbf{X}$ satisfies 
\[
\left|\mathbf{X}'''(s)\right|
\le
\left|\partial_s \kappa(s)\right| + \kappa^2(s).
\]
Under the assumption of locally small slopes, the third derivative $\overline{\Gamma}'''(\overline{x}_*)$ in the graph representation is of the same order as $\mathbf{X}'''(s)$ in the arc-length parametrization, and hence
\[
\left|\overline{\Gamma}'''(\overline{x}_*)\right|
\;\lesssim\;
\left|\partial_s \kappa(\overline{x}_*)\right| + \kappa^2(\overline{x}_*).
\]
Substituting this estimate into the \eqref{eq:Gamma} and using $\overline{x}_* \le h_i$, we obtain an upper bound for the local geometric interpolation error on the segment $\Gamma_i$:
\[
\begin{aligned}
	\max_{\overline{x}\in(0,h_i)} \left|\overline{\Gamma}(\overline{x})\right|
	&\lesssim
	h_i^2 \left|\kappa\right|
	+
	h_i^3 \left|\partial_s \kappa\right|
	+
	h_i^3 \kappa^2 \\
	&\lesssim
	C_1 h_i^2 \left|\kappa\right|
	+
	C_2 h_i^3 \left|\partial_s \kappa\right|
	+
	C_3 h_i^3 \kappa^2,
\end{aligned}
\]
where the constants $C_1$, $C_2$, and $C_3$ depend only on the regularity of the curve.


This estimate reveals a hierarchical structure in the local geometric interpolation error. Unlike \cite{Mackenzie2019}, where the monitor function is constructed solely in terms of the curvature $\kappa$ under an implicit asymptotic assumption $h_i \to 0$, we consider a practically relevant regime where $h_i$ is moderate (i.e., not strictly in the asymptotic limit $h_i \to 0$). In this non-asymptotic setting, higher-order geometric contributions cannot be automatically neglected, as different terms may enter the error expansion at comparable orders. Specifically, when the curvature variation is prominent such that $h_i \vert{}\partial_s \kappa\vert{} \approx \vert{}\kappa\vert{}$, the third-order contribution $h_i^3 \vert{}\partial_s \kappa\vert{}$ becomes comparable in magnitude to $h_i^2 \vert{}\kappa\vert{}$, and therefore cannot be neglected. Similarly, when $h_i \kappa^2\approx \kappa$, the curvature-squared term $h_i^3 \kappa^2$ also contributes non-negligibly to the local error. This analysis provides a sound theoretical justification for incorporating higher-order geometric information into the design of monitor functions.

\begin{figure}
	\centering
	\includegraphics[width=0.8\linewidth]{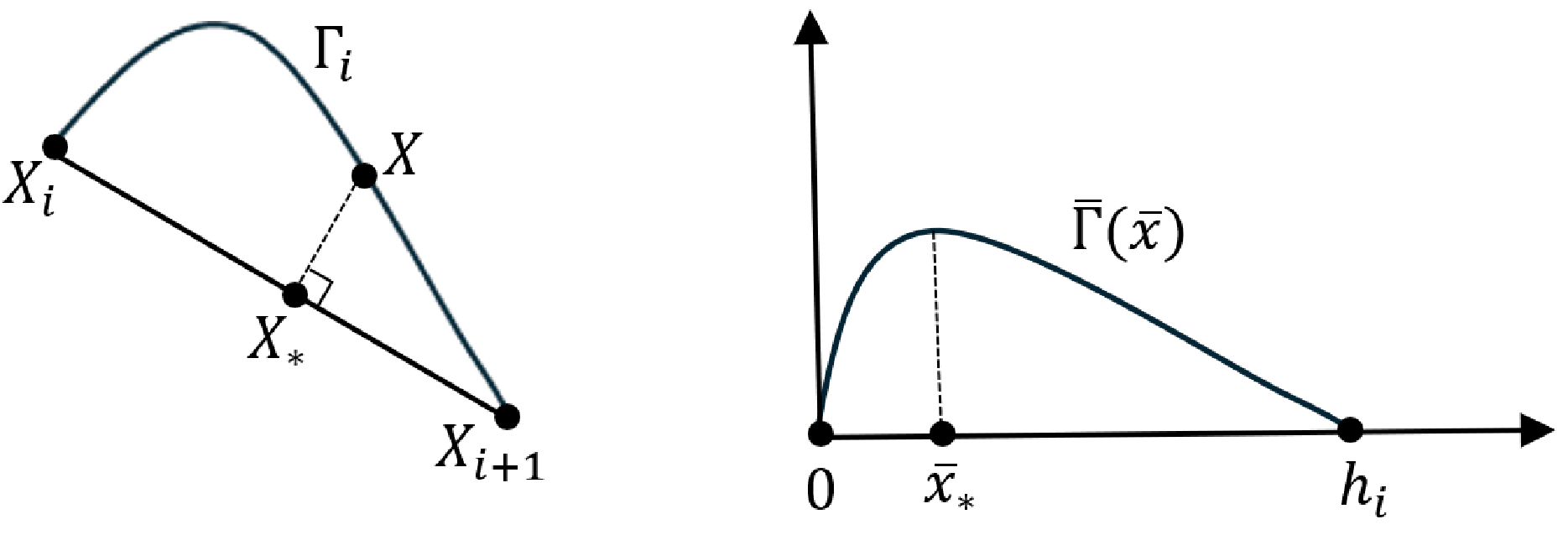}
	\caption{Left: A local segment $\Gamma_i$ of a smooth planar curve together with its piecewise linear interpolation between consecutive nodes $\mathbf{X}_i$ and $\mathbf{X}_{i+1}$. For a point $\mathbf{X}$ on the curve, the local interpolation error is measured by the normal distance to the corresponding point $\mathbf{X}_*$ on the linear interpolant. Right: After applying a rigid translation and rotation, the curve segment is represented in graph form as $\overline{\Gamma}(\overline{x})$ over the interval $[0,h_i]$. The maximal geometric interpolation error on $\Gamma_i$ corresponds to the absolute maximum of $\overline{\Gamma}(\overline{x})$, attained at $\overline{x}=\overline{x}_*$.}
	\label{fig:monitor_fun}
\end{figure}


\section{Supplementary weighted arc-length redistribution strategy}
\label{app:WAR}

In this appendix, to accommodate more complex initial curves, we present the weighted arc-length redistribution strategy employed in the numerical experiments. The redistribution is based on the weighted equidistribution principle and aims to improve the spatial distribution of mesh points throughout the evolution.
Specifically, for a discrete closed curve
\[
\Gamma_h=\{\mathbf X_i\}_{i=1}^{M},
\]
the redistributed mesh is generated by enforcing the weighted arc-length
equidistribution principle, namely,
\begin{equation}
	\int_0^{s_i}m(s)\,{\rm d}s
	=
	\frac{i}{M}
	\int_0^Lm(s)\,{\rm d}s,
	\qquad
	i=1,\ldots,M,
\end{equation}
where $L$ denotes the total length of the curve and
$m(s)$ is the monitor function that controls the mesh concentration.

The monitor function controls the density of mesh points according to
the local geometric characteristics of the curve. In above proposed adaptive moving mesh methods, the
monitor function is constructed from curvature-related quantities, i.e.,
\[
m
=
m(\kappa,\partial_s\kappa),
\]
where both the curvature and its arc-length variation are taken into
account. To adapt the redistribution strategy to different geometric
configurations, the monitor function is selected automatically according
to several geometric indicators. The corresponding adaptive procedure is
summarized in Algorithm~\ref{alg:monitor}. After the monitor function has been determined, the mesh is
redistributed according to the weighted equidistribution principle.
The detailed implementation is summarized in
Algorithm~\ref{alg:redistribution}.


\begin{algorithm}[t]
	\caption{Adaptive monitor selection strategy}
	\label{alg:monitor}
	\small
	
	\textbf{Input:}
	The nodal coordinates
	$\mathbf X_j:=\mathbf X_j^{n+1}$,
	$j=1,\ldots,M$,
	computed by the BDF$k$-FDMs.
	
	\vspace{0.3em}
	
	\noindent
	\textbf{Step 1: Compute geometric indicators.}
	
	Compute the segment lengths
	
	\[
	\Delta s_j
	=
	\|\mathbf X_{j+1}-\mathbf X_j\|,
	\]
	
	the discrete curvature derivative
	
	\[
	\delta_s\kappa_j
	=
	\frac{\kappa_{j+1}-\kappa_{j-1}}
	{\Delta s_j+\Delta s_{j-1}},
	\]
	
	and evaluate
	
	\[
	C_0=\max_j|\kappa_j|,
	\qquad
	C_1=\max_j|\delta_s\kappa_j|,
	\qquad
	Q=
	\frac{\max_j\Delta s_j}
	{\min_j\Delta s_j}.
	\]
	
	\vspace{0.3em}
	
	\noindent
	\textbf{Step 2: Construct the basic monitor using $C_0$.}
	
	According to the curvature magnitude,
	
	\[
	m_j=
	\begin{cases}
		1+\alpha |\kappa_j|,
		&
		C_0<C_0^{\rm low},
		\\[0.4em]
		
		1+\alpha
		\Bigl(
		(1-\beta)|\kappa_j|
		+
		\beta\kappa_j^2
		\Bigr),
		&
		C_0^{\rm low}
		\le
		C_0
		<
		C_0^{\rm high},
		\\[0.5em]
		
		1+\alpha\kappa_j^2,
		&
		C_0
		\ge
		C_0^{\rm high}.
	\end{cases}
	\]
	
	where
	$0<\beta<1$
	is the blending parameter.
	
	\vspace{0.3em}
	
	\noindent
	\textbf{Step 3: Incorporate curvature variation using $C_1$.}
	
	If
	
	\[
	C_1<C_1^{\rm low},
	\]
	
	retain the monitor obtained in Step~2.
	
	Otherwise,
	
	\[
	m_j=
	\begin{cases}
		m_j+\gamma|\delta_s\kappa_j|,
		&
		C_1^{\rm low}
		\le
		C_1
		<
		C_1^{\rm high},
		\\[0.5em]
		
		1+\alpha
		\sqrt{
			\kappa_j^2
			+
			\gamma
			(\delta_s\kappa_j)^2},
		&
		C_1
		\ge
		C_1^{\rm high}.
	\end{cases}
	\]
	
	Here
	$\alpha=\alpha_0$
	and
	$\gamma=\gamma_0$
	are prescribed parameters.
	
	\vspace{0.3em}
	
	\noindent
	\textbf{Step 4: Perform weighted arc-length redistribution.}
	
	Use the monitor values
	$\{m_j\}$
	together with the current nodal coordinates
	as the input of
	Algorithm~\ref{alg:redistribution}
	to obtain the redistributed nodes
	$\{\mathbf X_j^{\rm new}\}$.
	
	\vspace{0.3em}
	
	\textbf{Output:}
	The redistributed mesh
	$\{\mathbf X_j^{\rm new}\}$.
	
\end{algorithm}



\begin{algorithm}[htbp]
	\caption{Adaptive weighted arc-length redistribution (A-WAR) algorithm.}
	\label{alg:redistribution}
	\small
	
	\textbf{Input:}
	The discrete curve
	$\{\mathbf{X}_j^{\,n+1}\}_{j=0}^{M}$
	computed by the BDF$k$-FDMs,
	together with the monitor values
	$\{m_j\}_{j=0}^{M}$.
	
	\vspace{0.3em}
	\noindent$\bullet$~\textbf{Step 1: Compute segment lengths.}~~
	Compute the discrete arc-length increments
	\[
	\Delta s_j
	=
	\left\|\mathbf{X}_{j+1}-\mathbf{X}_{j}\right\|,
	\qquad
	j=0,\ldots,M-1.
	\]
	
	\vspace{0.3em}
	\noindent$\bullet$~\textbf{Step 2: Compute weighted arc-length increments.}~~
	Using the monitor values, evaluate
	\[
	\omega_j
	=
	\frac12(m_j+m_{j+1})\,\Delta s_j,
	\qquad
	j=0,\ldots,M-1.
	\]
	
	\vspace{0.3em}
	\noindent$\bullet$~\textbf{Step 3: Compute the cumulative weighted arc-length.}~~
	Construct the cumulative weighted arc-length distribution
	\[
	w_0=0,
	\qquad
	w_j
	=
	\sum_{k=0}^{j-1}\omega_k,
	\qquad
	j=1,\ldots,M,
	\]
	and define the total weighted arc-length by
	\[
	L_w=w_M.
	\]
	
	\vspace{0.3em}
	\noindent$\bullet$~\textbf{Step 4: Construct the target distribution.}~~
	Generate the equidistributed weighted arc-length coordinates
	\[
	w_i^{\mathrm{tar}}
	=
	\frac{i}{M}L_w,
	\qquad
	i=0,\ldots,M.
	\]
	
	\vspace{0.3em}
	\noindent$\bullet$~\textbf{Step 5: Redistribute the mesh nodes.}~~
	Determine the new parameter values
	$\rho_i^{\mathrm{new}}$
	by inverting the discrete mapping
	\[
	w_j \mapsto \rho_j
	\]
	through piecewise linear interpolation at the target values
	$w_i^{\mathrm{tar}}$.
	The redistributed mesh points are then obtained by evaluating the discrete
	curve at the new parameter values,
	\[
	\mathbf{X}_i^{\mathrm{new}}
	=
	\mathbf{X}(\rho_i^{\mathrm{new}}).
	\]
	
	\vspace{0.3em}
	\noindent$\bullet$~\textbf{Step 6: Update the mesh.}~~
	For closed curves, impose the periodicity condition
	\[
	\mathbf{X}_0^{\mathrm{new}}
	=
	\mathbf{X}_M^{\mathrm{new}}.
	\]
	Finally, update the computational mesh by
	\[
	\mathbf{X}_j
	\leftarrow
	\mathbf{X}_j^{\mathrm{new}},
	\]
	which is used as the computational mesh for the next BDF$k$-FDM time step.
	
\end{algorithm}


The above A-WAR strategy allocates
additional mesh points to regions with relatively large curvature or
rapid curvature variation while maintaining a smooth overall mesh
distribution. Numerical experiments demonstrate that the proposed
strategy effectively improves mesh quality and enhances the robustness
of the numerical simulation for evolving interfaces with complicated
geometric structures.

\begin{rem}
 Compared with the A-BDF$k$-FDMs, the A-WAR strategy may be capable of
 handling more complicated curve evolutions. However, the redistribution
 procedure is essentially a heuristic mesh adaptation technique, which is
 introduced externally rather than being derived from the underlying
 geometric evolution equations. Therefore, it is difficult to establish
 rigorous theoretical properties for the resulting numerical method.
 In particular, in contrast to the A-RLM-BDF$k$-FDMs, the A-WAR strategy
 does not possess a theoretical guarantee of discrete energy stability.
\end{rem}

\bibliographystyle{elsarticle-num}
\bibliography{thebib}

\end{document}